\def\ps@pprintTitle{
 \let\@oddhead\@empty
 \let\@evenhead\@empty
 \def\@oddfoot{}
 \let\@evenfoot\@oddfoot}
 \renewcommand{\MaketitleBox}{
  \resetTitleCounters
  \def\baselinestretch{1}
  \begin{center}
    \def\baselinestretch{1}
    \Large \@title \par
    \vskip 18pt
    \normalsize\elsauthors \par
    \vskip 10pt
    \footnotesize \itshape \elsaddress \par
  \end{center}
  \vskip 12pt
}
\newcommand{\mb}{\mathbb}
\newcommand{\mc}{\mathcal}
\newcommand{\mfN}{{\mathfrak{N}}}
\newcommand{\mfF}{{\mathfrak{F}}}
\newcommand{\mfT}{{\mathcal{T} }}
\newtheorem{theorem}{Theorem}[section]
\newtheorem{proposition}[theorem]{Proposition}
\numberwithin{equation}{section}
\begin{document}

\begin{frontmatter}

\title{Ranking Quantilized Mean-Field Games with an Application to Early-Stage Venture Investments \footnote{This work was first presented at Optimization Days held in Montreal, QC, Canada, from May 6th to 8th, 2024.}\,\footnote{Dena Firoozi and Michèle Breton would like to acknowledge the support of the Natural Sciences and Engineering Research Council of Canada
(NSERC), grants RGPIN-2022-05337, DGECR-2022-00468, and RGPIN-2020-05053, respectively.   
}}

\author[inst1]{Rinel Foguen Tchuendom}

\author[inst2]{Dena Firoozi}

\author[inst1]{Michèle Breton}

\affiliation[inst1]{organization={Department of Decision Sciences},
            addressline={HEC Montréal}, 
            city={Montreal},
            state={QC},
            country={Canada}\\(email: rinel.foguent@gmail.com and   michele.breton@hec.ca)}

\affiliation[inst2]{organization={Department of Statistical Sciences},
            addressline={University of Toronto}, 
            city={Toronto},
            state={ON},
            country={Canada\\ (email: dena.firoozi@utoronto.ca)}}

\begin{abstract}
Quantilized mean-field game models involve quantiles of the population's distribution. We study a class of such games with a capacity for ranking games, where the performance of each agent is evaluated based on its terminal state relative to the population's $\alpha$-quantile value, $\alpha \in (0,1)$. This evaluation criterion is designed to select the top $(1-\alpha)\%$ performing agents. We provide two formulations for this competition: a target-based formulation and a threshold-based formulation. In the former and latter formulations, to satisfy the selection condition, each agent aims for its terminal state to be \textit{exactly} equal and \textit{at least} equal to the population's $\alpha$-quantile value, respectively. 

For the target-based formulation, we obtain an analytic solution and demonstrate the $\epsilon$-Nash property for the asymptotic best-response strategies in the $N$-player game. Specifically, the quantilized mean-field consistency condition is expressed as a set of forward-backward ordinary differential equations, characterizing the $\alpha$-quantile value at equilibrium. For the threshold-based formulation, we obtain a semi-explicit solution and numerically solve the resulting quantilized mean-field consistency condition.
 
Subsequently, we propose a new application in the context of early-stage venture investments, where a venture capital firm financially supports a group of start-up companies engaged in a competition over a finite time horizon, with the goal of selecting a percentage of top-ranking ones to receive the next round of funding at the end of the time horizon. We present the results and interpretations of a set of numerical experiments for both formulations discussed in this context, which illustrate that the target-based formulation closely approximates the threshold-based formulation in the scenarios considered.

\end{abstract}

\begin{keyword}
Mean-field games \sep $\alpha$-quantile values \sep ranking games\sep early-stage venture investments \sep start-up companies 
\end{keyword}

\end{frontmatter}

\section{Introduction}
\label{sec:introduction}

Mean-field games (MFGs) have emerged as a mathematical framework for modeling the behavior of large populations of interacting agents in a variety of fields, ranging from economics and finance to physics, engineering, and social sciences. The core idea of MFGs is to approximate the collective behavior by considering only interactions between a representative agent and the rest of the population, thus providing a bridge between micro-level individual behavior and macro-level outcomes. In MFGs, each agent is weakly coupled with others through the empirical distribution of their states or control inputs. As the number of agents approaches infinity, this distribution converges to what is known as the mean-field distribution. The behavior of agents in such large populations, as well as the resulting equilibrium, can be approximated by the solution of corresponding infinite-population games (see, for example, \citep{huang2006large, huang2007large, lasry2007mean, carmona2018probabilistic, bensoussan2013mean, cardaliaguet2019master}).  The success of this approach is mainly due to the reduction in the computational complexity associated with the large number of agents. 

MFG theory is still developing, and numerous open problems remain. These include the extension of MFG models to handle the incorporation of more complex interaction structures and the development of numerical methods applicable to practical situations.

Moreover, MFGs have found applications in various domains, particularly within financial markets, where they may be used to model a wide array of problems. Specifically, applications include systemic risk \citep{carmona2013mean,Bo2015SystemicRiskInterbanking,chang2022Systemic}, price impact and optimal execution \citep{casgrain_meanfield_2020, FirooziISDG2017, Cardaliaguet2018Meanfieldgame, huang2019mean}, portfolio trading \citep{Cardaliaguet2018Meanfieldgame,Fu-Horst-2021,Fu-Horst-2022}, equilibrium pricing \citep{Firoozi2022MAFI, gomes_mean-field_2018, fujii_mean_2021}, and electricity markets \citep{Rene-Aid-2021,alasseur2023}.

A recent development, related to the incorporation of more complex interaction structures,  involves generalizing the classical linear-quadratic Gaussian setting, where the model only includes the mean value of the distribution of state or control across the agent population, to scenarios including $\alpha$-quantiles of the distribution. 
The existing literature using quantile representation of the population in MFGs is nascent and includes \citep{tchuendom2019quantilized,tchuendom2024class,gao2025linear}. We also note that a class of stochastic partial differential equations with coefficients depending on quantiles are studied in \citep{crisan2014conditional}.

In this work, we first address a new class of quantilized MFG models where $\alpha$-quantiles of the population's state distribution are employed to measure the relative performance of participating agents. More specifically, this class of MFGs represents ranking competitions held by a coordinator who aims to select a certain proportion $(1-\alpha)$, $\alpha \in (0,1)$, of top-ranking agents at the end of the competition, without differentiating between the successful ones. Hence, agents whose terminal state is at least equal to the sample $\alpha$-quantile of the population will succeed. We provide two formulations for this competition, based on the behavior of the pool of agents: \emph{target-based} and \emph{threshold-based} formulations. 
In the target-based formulation, agents aim for their terminal state to be \textit{exactly} equal to the required threshold. For this model, we are able to obtain an analytical solution. 
In the threshold-based formulation, agents aim for their terminal state to be \textit{at least} equal to the required threshold. This model is intuitive, but challenging mathematically, and does not admit a closed-form solution. Through numerical experiments, we show that the target-based formulation closely approximates the threshold-based formulation across all considered scenarios. Related literature on this topic includes recent works \citep{ankirchner2024mean,Ankirchner-MOR-2024}, where agents' dynamics are modeled as oscillating Brownian motions with volatility controlled by the agent. This choice of dynamics enables the application of properties of oscillating Brownian motions, which are not applicable to our setup. Another set of works \citep{bayraktar2016rank, Bayraktar2019LargeTournament,erhanyuchong2021ranking} studies competitions where agents receive rewards at the terminal time based on a bounded function. This function depends on the terminal state value and the agent's rank, which is determined by the proportion of agents whose terminal state value is at most equal to that of the agent. Unlike our setup, this reward function is bounded, which leads to differences in the analysis. A related but distinct line of research concerns risk-sensitive MFGs, which have been studied, for instance, in \citep{tembine2013risk, saldi2018discrete,moon2016linear,moon2019risk,Minyi-Wang2025risk,FirooziRen-2024-common-noise,Firoozi-Liu-RS-2025}.

We then propose a new application domain of MFGs in  financial markets, that is, early-stage venture capital investments. These investments refer to the funding provided to startup companies and entrepreneurial ventures at the beginning of their development cycle. This type of investment is risky but often crucial for the growth of new products and ideas. We use the concept of ranking quantilized MFGs presented in this work to address the portfolio selection problem faced by a venture capital firm through a dynamic game. 
Specifically, we examine a scenario in which a venture capital firm financially supports a group of start-up companies engaged in a competition over a finite time horizon, with the goal of selecting a proportion of top-ranking ones to receive the next round of funding at the end of the time horizon. 
We provide illustrations and insights into the behavior of the pool of start-up companies and the evolution of their market values over time. 
While the scientific literature on venture investments is extensive, it is, to the best of our knowledge, predominantly based on qualitative approaches, such as interviews with industry participants. A small subset of quantitative studies applies optimization and game theory methods, typically  focusing on the interactions between a single start-up and the venture investors (see, for instance,  \citep{ewens2022venture, archibald2021investment, elitzur2003multi, lukas2016venture}). For studies involving multiple distinct start-ups, we refer to \citep{gornall2020squaring,kim2014portfolio} for analysis of start-up valuation models and venture portfolio information disclosure, respectively. 

The contributions of this paper are summarized as follows:

\begin{itemize}
    \item  We present an MFG model where interactions among agents occur through the sample $\alpha$-quantiles of their states. This formulation introduces a new class of ranking games. For this model, we establish the $\epsilon$-Nash property absent from \citep{gao2025linear}, which considers a related class of games where the $\alpha$-quantile appears as a weight in the cost functional, but it does not capture ranking-type interactions. Additionally, our proof of the $\epsilon$-Nash property is also applicable to the model in \citep{tchuendom2023LQGqMFGs}, where agents interact through $\alpha$-quantiles of control distribution, and it relaxes the assumption of a uniform bound on square-integrable deviating strategies imposed in that work.

    \item We introduce a class of ranking games, where there is a threshold for success defined by a sample $\alpha$-quantile  value. In such games, there is no distinction between successful agents, who are all considered successful as long as they meet this (endogenous) threshold. Recent works \citep{Ankirchner-MOR-2024,Ankirchner-MFE-2024} study a class of ranking games based on population $\alpha$-quantiles, in which each agent controls the diffusion coefficient of an oscillating Brownian motion. However, the analysis in these papers relies on specific properties of such Brownian motions that do not apply to our setup.

     \item We explore a new application related to early-stage venture investments. To the best of our knowledge, this is the first work to propose a dynamic game model to address the portfolio selection problem of a venture capital firm.  
     \item We provide a numerical scheme and interpret the results of the numerical experiments conducted in the context of the venture investment problem under study.  
\end{itemize}

The remainder of the paper is structured as follows. In \Cref{sec:ranking-game}, we present the ranking quantilized MFG models under study. We then examine two specific formulations: a target-based formulation in \Cref{sec:target-general} and a threshold-based formulation in \Cref{sec:threshold-general}. In \Cref{sec:application}, we propose an application of the quantilized ranking games in the context of early-stage venture  investments and discuss the results of numerical experiments. We conclude the paper in  \Cref{sec:conclusion}.

\section{Ranking Quantilized Mean Field Games}\label{sec:ranking-game}
\subsection{$N$-Player Game Model} \label{general-fin-pop}
We model a competition between $N$ homogeneous agents over a finite time horizon $\mc{T}=[0,T]$. This competition is set by a coordinator who aims to select a certain proportion $(1-\alpha)$, $\alpha \in (0,1)$, of top-ranking agents at the end of the time horizon. The selection criterion is predetermined and announced by the coordinator before the competition begins. Subsequently, at the terminal time $T$, the performance of the agents is evaluated. Those whose terminal states are equal to or greater than the sample $\alpha$-quantile value of the  $N$ agents are selected by the coordinator. Thus, the sample $\alpha$-quantile value of the terminal state of the participating agents at time $T$ serves as a success threshold, determined endogenously based on the collective performance of all agents. This competition can be considered as a ranking game with a success threshold, where no distinction is made between successful agents. The challenge of this problem arises from the stochastic nature of the success threshold, which is not known a priori by participating agents. Such competitions may be relevant in various contexts. In this paper, we consider an application to early-stage venture investments in \Cref{sec:application}.

More precisely, we study a competition that consists of $N$ agents, who individually have an asymptotically negligible impact on the system as $N$ tends to infinity, over the time horizon $\mfT=[0,T],\, T< \infty$. For simplicity, we assume that the agents are homogeneous, that is, they share the same model parameters. The dynamics of agent $i$, $i \in\mathfrak{N}=\{1,2,\cdots,N\}$, are given by 
\begin{equation} 
 dx_t^i = \left( \gamma_t + b u_t^i \right) dt + \sigma dw_t^i, \quad x_0^i = \xi^i,\label{general-dynamics-fin-pop}
\end{equation}
where the state and the control action of agent $i$ at time $t$ are denoted, respectively, by $x^{i}_{t}\in \mathbb{R}$ and $u^{i}_{t} \in \mathbb{R}$. Specifically, $u^i_t$ represents the effort exerted by agent $i$ at time $t$ and the positive coefficient $b\in \mathbb{R}_{>0}$ is interpreted as its efficiency strength. The process $\gamma =\{\gamma_t: t \in \mfT\}$ is exogenous and deterministic. It can be viewed as the support, whether financial or of another form, provided by the coordinator. The uncertainty specific to the environment of agent $i$ is modeled by the idiosyncratic one-dimensional Wiener process $w^i$ and the constant volatility $\sigma \in \mathbb{R}_{>0}$. More precisely,  $\{w^{i}\in \mathbb{R},i\in \mfN\}$ are $N$ independent Wiener processes defined on the filtered probability space $\big( \Omega,\mfF,\{\mathcal{F}_{t}^{[N]}\}_{t\in \mc{T}},\mathbb{P} \big)$, where $\mathcal{F}_{t}^{[N]}:=\sigma (x^{i}_0, w^{i}_{s}, i \in \mfN, s \in [0,t])$. The initial conditions $\{\xi^i\}_{i \in \mfN}$ are independent and identically distributed, following a normal distribution $\xi^i\sim \mc{N}(m_0,\nu^2)$. 

Following the coordinator's announcement of the quantile level $\alpha$, agent $i,\, i \in \mfN$, aims to select the control process $u^i$ that minimizes the cost functional 
\begin{equation}\label{general-cost-fin-pop}
    J_i^{[N]}(u^i, u^{-i}, \alpha) = \mathbb{E} \left[ \int_0^T \frac{r}{2} (u^i_t)^2 dt +   g\left( x_T^i -  q^{\alpha, [N]}_T  \right)   \right], 
\end{equation}
where $u^{-i} \coloneqq  (u^1,\dots,u^{i-1}, u^{i+1},\dots, u^N)$ and $r\in \mathbb{R}_{>0}$ is a positive constant. The running cost expressed through the integral term indicates that the efforts are costly and finite. Moreover, for a fixed $\alpha \in (0, 1)$, $q^{\alpha, [N]}_T$ represents the sample $\alpha$-quantile\footnote{In this paper, the terms ``sample $\alpha$-quantile values'' and ``empirical $\alpha$-quantile values'' are used interchangeably.} value of the terminal states $\{x^i_T\}_{i=1}^N$ of the $N$ agents involved in the system, defined as 
\begin{equation} \label{general-sample-quantile}
   q^{\alpha, [N]}_T :=  \min_{k \in\mfN} \left\{ x^k_T \Bigg| \frac{1}{N} \sum_{i=1}^N \mathds{1}_{\{x^i_T \leq x^k_T\}} \geq \alpha \right\},   
\end{equation}
where $\mathds{1}_{\{.\}}$ denotes the indicator variable. For instance, if $\alpha = 0.9$, an agent $i$ whose terminal state satisfies $x^i_T > q^{0.9, [N]}_T$ is in the top 10 percent of the population at the terminal time $T$. The function $g(x_T^i -  q^{\alpha, [N]}_T )$, for which we consider two specific choices in this paper, measures the distance between agent $i$'s terminal state $x_T^i$ and the required threshold $q^{\alpha, [N]}_T$. This function is assumed to have been agreed upon by all agents following the coordinator's announcement. Agent $i$ aims to minimize the cost functional \eqref{general-cost-fin-pop} by applying efforts $u^i_t$ over the time interval $\mfT$. 

According to \eqref{general-dynamics-fin-pop}-\eqref{general-sample-quantile}, the agents are coupled with each other through the sample $\alpha$-quantile value $q^{\alpha, [N]}_T$ of their terminal states, or equivalently, through the $\alpha$-quantile value of the empirical distribution of the agents' states at the terminal time $T$. We note that the weight $\tfrac{1}{N}$ assigned to the contribution of each agent to the sample $\alpha$-quantile value of the population implies that each agent has a uniform and asymptotically negligible impact on the system as $N$ approaches infinity.

For specific choices of the terminal cost $g( x_T^i -  q^{\alpha, [N]}_T)$, it is desirable to identify a set of best-response strategies $\{u_i^\star \}_{i=1}^N$ that yields a Nash equilibrium for the system \eqref{general-dynamics-fin-pop}-\eqref{general-sample-quantile} such that 
\begin{equation}
J_{i}^{[N]}(u^{i,\star},u^{-i,\star}) = \inf_{u^i \in \mc{U}^{[N]}}   J_{i}^{[N]}(u^{i},u^{-i,\star}),\quad  \forall i\in \mfN,
\end{equation}
where $\mc{U}^{[N]}$ denotes the set of admissible strategies for agent $i$ defined as 
\begin{equation}
    \mathcal{U}^{[N]} = \left\lbrace u^i : \Omega \times \mfT \longrightarrow \mathbb{R} \,\, \bigg\vert  \,\,  \text{$u^i$ is $\mc{F}^{[N]}$-adapted and}\,\, \mathbb{E} \left[ \int_0^T  (u_t^i)^2 dt \right] < \infty \right\rbrace. 
\end{equation}
Given that addressing this problem can be challenging for a large number of agents, requiring each agent to solve a high-dimensional optimal control problem and observe the states of others, which is not feasible in practice, one usually seeks an approximate equilibrium through the following steps: 
\begin{itemize}
    \item[(i)] Analyzing the limiting problem where the number $N$ of agents tends to infinity and identifying an equilibrium strategy for a representative agent. This step consists of addressing a one-dimensional stochastic control problem that requires only self-state observations and a quantilized mean-field consistency condition. 
    \item[(ii)] Showing that the set of limiting strategies forms an $\epsilon$-Nash equilibrium for the $N$-player game model described by \eqref{general-dynamics-fin-pop}-\eqref{general-sample-quantile}. 
\end{itemize}
We now present the limiting model and describe the main steps for solving this problem.
\subsection{Limiting Model}\label{sec:general-inf-pop}
In this section, we present the limiting problem where the number of agents, $N$, tends to infinity. This problem involves the limiting distribution $\mu$
 and the limiting ${\alpha}$-quantile $\bar{q}^{\alpha}$ instead of the empirical distribution and sample $\alpha$-quantile value ${q}^{\alpha, [N]}$, respectively. 
 
Let $\mathcal{P}_2 (\mathbb{R})$ be the set of probability laws supported on $\mathbb{R}$ with finite second moment. For any $\alpha \in (0,1)$ and $\mu \in \mathcal{P}_2 (\mathbb{R})$, we introduce the $\alpha$-quantile value function
\begin{equation}
\begin{aligned}
     Q  &: \ (0,1) \times \mathcal{P}_2 (\mathbb{R}) \ \longrightarrow \mathbb{R} \\
     &: \ (\alpha, \mu) \mapsto Q(\alpha, \mu) := \inf \left\{ l \in \mathbb{R} \ \vert \ \mu \left( (- \infty, l] \right) \geq \alpha   \right\},
\end{aligned}
\end{equation}
where $Q(\alpha, \mu)$ corresponds to the generalized inverse of the cumulative distribution function associated with the probability law $\mu \in \mathcal{P}_2 (\mathbb{R})$. By definition, for any random variable following the probability law $\mu$, the probability that it takes values greater than or equal to $Q(\alpha, \mu)$ is less than or equal to $1 - \alpha$. 
 
We now consider the  limiting mean-field game model of a representative agent given by 
 \begin{gather}
        dx_t = \left( \gamma_t + b u_t \right) dt + \sigma dw_t,  \label{gmfg1}\\
        J(u, \alpha)  = \mathbb{E} \left[ \int_0^T \frac{r}{2} (u_t)^2 dt +  g \left( x_T -  \bar{q}^\alpha_T   \right)   \right],  \label{gmfg2}
    \end{gather} 
  with $x_0 = \xi \sim \mc{N}(m_0,\nu^2)$, which involves the limiting $\alpha$-quantile value $\bar{q}^\alpha$. In the above equations, the index $i$ is dropped since all agents are homogeneous.

Additionally, for a representative agent, the space of admissible control processes is defined as 
\begin{equation}
    \mathcal{U} = \left\lbrace u : \Omega \times \mfT \longrightarrow \mathbb{R} \,\, \bigg\vert  \,\,  \text{$u$ is $\mc{F}$-adapted and}\,\, \mathbb{E} \left[ \int_0^T  (u_t)^2 dt \right] < \infty \right\rbrace, 
\end{equation}
where the filtration  $\mathcal{F} = \{\mathcal{F}_{t}: t\in \mfT\}$ is defined such that
$\mathcal{F}_{t}:=\sigma (x_0, w_{s}, s \in \mfT).$

We aim to find the best-response strategy of a representative agent in the limiting case, also referred to as the (MFG) equilibrium strategy of the agent. This strategy is the best strategy of the representative agent in response to the aggregate behavior (mean-field effect) of agents in the limiting case where the number of agents, $N$, tends to infinity. The set of these best response strategies forms a Nash equilibrium for the limiting model with an infinite number of agents.    

For a fixed $\alpha \in (0,1)$, finding the best-response strategy for a representative agent in the limiting model, \eqref{gmfg1}-\eqref{gmfg2}, involves identifying a pair of real-valued processes $ \left( \bar{q}^\alpha, u^\star \right)$, where $u^\star \in \mathcal{U}$,
through the two steps detailed below.
 
\textit{(i) Stochastic Control Problem}: We fix the $\alpha$-quantile value process at $q^\alpha = \{q^{\alpha}_t: t \in \mfT\}$, which is assumed to be known. We then solve the resulting stochastic control problem for a representative agent given by 
\begin{gather}
        dx_t = \left( \gamma_t + b u_t \right) dt + \sigma dw_t,  \label{gmfg1-fixed}\\
        J(u, \alpha)  = \mathbb{E} \left[ \int_0^T \frac{r}{2} (u_t)^2 dt +  g\left( x_T -  {q}^\alpha_T   \right)   \right], \label{gmfg2-fixed}
    \end{gather} 
with $x_0 = \xi \sim \mc{N}(m_0,\nu^2)$. This problem involves finding the  optimal pair $(x^\star,u^\star)=\{(x^\star_t,u^\star_t): \ t \in \mfT \}$ such that 
\begin{gather}\label{stoch-cntrl-limit-general}
        u^\star = \operatorname*{argmin}_{u \in \mathcal{U}} \mathbb{E} \bigg[ \int_0^T \frac{r}{2} (u_t)^2 dt + g\Big( x_T -  {q}^{\alpha}_T\Big)    \bigg],\\
       d{x_t^\star}= ({\gamma_t} + b{u_t^\star})dt+\sigma {dw_t}. 
    \end{gather}
    Since the limiting $\alpha$-quantile value is a deterministic quantity and is assumed to be fixed in this step, it is straightforward to solve this problem and to obtain the optimal pair $(x^\star, u^\star)$. 
    
 The idea of fixing the $\alpha$-quantile value in this step stems from the observation that in the limiting case, where there is an infinite number of agents, the $\alpha$-quantile value of the population distribution remains unchanged if one asymptotically negligible agent unilaterally deviates from the set of equilibrium strategies.

 \textit{(ii) Quantilized Mean-Field Consistency Condition}: We equate the resulting $\alpha$-quantile values $Q(\alpha, \mathcal{L}(x^\star_t)),\,t\in\mc{T}$, when the obtained optimal strategy $u^\star$ is applied with the assumed $\alpha$-quantile values $\{q^\alpha_t: t\in \mc{T}\}$ used to obtain this strategy in step \textit{(i)} as in 
            \begin{equation}
            q^\alpha_t = Q(\alpha, \mathcal{L}(x^\star_t)),\quad t\in \mc{T}, \label{MF-consistency-general}  
            \end{equation}  
where $\mathcal{L}(x^\star_T)$ denotes the law of the optimal state. If the above equation admits a fixed point, this fixed point, denoted by $\bar{q}^\alpha$, characterizes the $\alpha$-quantile value at equilibrium. 

Finally, we discuss the choice of the terminal cost function $g(.)$.
\subsection{Terminal Cost}
Regarding the terminal cost in \eqref{general-cost-fin-pop}, various interesting  choices for the function $g(.)$ may be considered. However, even simple choices can render the problem complex and pose significant mathematical challenges due to the presence of $\alpha$-quantiles in the model. We provide two formulations of the competition among agents following the announcement of the coordinator: a target-based formulation presented in \Cref{sec:target-general} and a threshold-based formulation presented in \Cref{sec:threshold-general}. 

For the target-based formulation, we establish the existence and uniqueness of a solution to the limiting problem, characterize it explicitly, and show its $\epsilon$-Nash property. For the threshold-based formulation, we characterize a semi-explicit solution to the resulting limiting stochastic control problem. However, establishing the existence of a unique solution to the resulting quantilized mean-field consistency condition and the $\epsilon$-Nash property remains an open mathematical question. For this formulation, we proceed numerically.

\section{Target-Based Formulation}\label{sec:target-general}
In the target-based formulation, it is assumed that, following the coordinator's announcement, each agent aims for its terminal state to be \textit{exactly} equal to the sample $\alpha$-quantile of the terminal states of the  participating agents. Hence, the sample $\alpha$-quantile at the terminal time $T$ acts as the target that the agents aim to achieve, where this target is determined by the collective performance of the participating agents. 
Clearly, agents that meet the target at the terminal time $T$ fulfill the selection condition set by the coordinator. 

\subsection{$N$-Player Game Model} \label{lq-fin-pop}
In the target-based scenario the dynamics and the cost functional of agent $i,\, i\in \mathfrak{N}$, are, respectively, given by \eqref{general-dynamics-fin-pop} and \eqref{general-cost-fin-pop}-\eqref{general-sample-quantile}, where
\begin{equation}\label{lq-cost-fin-pop}
    g(x_T^i -  q^{\alpha, [N]}_T):=  \frac{\lambda}{2} \left( x_T^i -  q^{\alpha, [N]}_T  \right)^2, 
\end{equation}
 and $\lambda \in \mb{R}_{>0}$ is a positive constant. The interpretations of other parameters and processes involved are the same as those of the general formulation given by \eqref{general-dynamics-fin-pop}-\eqref{general-sample-quantile}. As it can be seen, in the target-based formulation the terminal cost of agent $i$ is quadratic, which models the fact that the agent aims for a specific target, that is $q^{\alpha, [N]}_T$. If at the terminal time $T$, the state of agent $i$ is smaller or greater than the target, it incurs a cost. However, if the terminal state of the agent is equal to the target, no cost is incurred. Agent $i$ chooses its strategy $u^i$ in order to minimize the cost functional defined by  \eqref{general-cost-fin-pop}-\eqref{general-sample-quantile} and \eqref{lq-cost-fin-pop}, subject to the dynamics \eqref{general-dynamics-fin-pop}.
 
We first address the limiting problem where the number of agents $N$ tends to infinity and identify an equilibrium strategy for a representative agent in \Cref{sec:lq-inf-pop}. We then show that the set of limiting strategies forms an $\epsilon$-Nash equilibrium for the $N$-player game model described by \eqref{general-dynamics-fin-pop}-\eqref{general-sample-quantile} and \eqref{lq-cost-fin-pop}, as detailed in \Cref{sec:E-Nash}. 

\subsection{Limiting Model}\label{sec:lq-inf-pop}
In this section, we address the limiting problem where the number of agents, $N$, tends to infinity. The limiting mean-field game model of a representative agent is given by \eqref{gmfg1}-\eqref{gmfg2}, where
 \begin{gather}
        g(x_T -  \bar{q}^\alpha_T)= \frac{\lambda}{2} \left( x_T -  \bar{q}^\alpha_T   \right)^2   .  \label{qmfg2}
    \end{gather} 
The following theorem characterizes the equilibrium strategy of the representative agent by solving the corresponding stochastic control problem and the quantilized mean-field consistency condition described in \Cref{sec:general-inf-pop}.

\begin{theorem}
\label{thm-lgq}
For a fixed quantile level $\alpha \in (0,1)$, there exists a solution pair of real-valued processes $\left( \bar{q}, u^\star \right)$ with $u^\star \in \mathcal{U}$ to the limiting quantilized MFG problem, described by \eqref{gmfg1}-\eqref{gmfg2} and \eqref{qmfg2}, if and only if there exists a solution $\{ \eta_t, \pi_t, v_t, \phi_t^\alpha, \bar{q}^\alpha_t : \ t \in \mfT \}$ to the set of forward-backward ODEs (FBODEs) given by
\begin{align}
    \frac{d \eta_t}{dt} &= \frac{b^2}{r} \eta^2_t,  &\eta_T &= \lambda, \label{fbode1} \allowdisplaybreaks\\
    \frac{d \pi_t}{dt} &= \frac{b^2}{r} \pi^2_t + 2 \frac{b^2}{r} \eta_t \pi_t,  &\pi_T &= - \lambda, \label{fbode2}  \allowdisplaybreaks\\
    \frac{dv_t}{dt} &= \sigma^2 - 2 \frac{b^2}{r} \eta_t v_t, &v_0 &= \nu^2, \label{fbode3} \allowdisplaybreaks \\
    \frac{d \phi^\alpha_t}{dt} &=  - \frac{\sigma^2}{2} \frac{\mathcal{X}_\alpha}{\sqrt{v_t}} \pi_t, &\phi^\alpha_T &= 0, \label{fbode4} \allowdisplaybreaks \\
    \frac{d \bar{q}^\alpha_t}{dt} &=   \gamma_t - \frac{b^2}{r} \phi^\alpha_t + \frac{\sigma^2}{2} \frac{\mathcal{X}_\alpha}{\sqrt{v_t}}, &q^\alpha_0 &= m_0 + \nu \mathcal{X}_\alpha,   \label{fbode5} 
\end{align}
where $\mathcal{X}_\alpha = Q \left( \alpha, \mathcal{N}(0,1) \right)$ is the $\alpha$-quantile value of the standard normal distribution. 

Moreover, for a representative agent, the best-response strategy at the MFG equilibrium, $\{ u^\star_t : \ t \in \mfT\}$, is given by
\begin{equation}
    u^\star_t = - \frac{b}{r} \left( \eta_t x^\star_t + \pi_t \bar{q}^\alpha_t + \phi^\alpha_t \right),\label{lq-cntrl-inf-pop}
\end{equation}
and the corresponding state process, $\{ x^\star_t: \ t \in \mfT\}$, satisfies
\begin{align}\label{lq-state-inf-pop}
    d x^\star_t &= \left( \gamma_t - \frac{b^2}{r} \eta_t x^\star_t - \frac{b^2}{r} \pi_t \bar{q}^\alpha_t - \frac{b^2}{r} \phi^\alpha_t \right) dt + \sigma dw_t, \quad x^\star_0 = \xi.
\end{align}
    
\end{theorem}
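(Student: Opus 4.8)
The plan is to execute the two-step procedure set out just before the statement (\Cref{sec:lq-inf-pop}): first solve the control problem \eqref{qmfg1-fixed}--\eqref{qmfg2-fixed} for an arbitrary but fixed deterministic quantile path $q^\alpha$, then impose the consistency condition \eqref{MF-consistency-quadratic}, and finally show that the resulting fixed point is equivalent to the FBODE system \eqref{fbode1}--\eqref{fbode5}.

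First I would treat \eqref{qmfg1-fixed}--\eqref{qmfg2-fixed} as a scalar linear--quadratic problem. Because the running cost is strictly convex ($r>0$), I would solve the associated HJB equation with the quadratic ansatz $V(t,x)=\tfrac12\eta_t x^2+s_t x+c_t$; matching powers of $x$ yields the scalar Riccati equation \eqref{fbode1} with $\eta_T=\lambda$, together with the linear ODE $\dot s_t=-\gamma_t\eta_t+\tfrac{b^2}{r}\eta_t s_t$, $s_T=-\lambda q^\alpha_T$. The Riccati equation is globally solvable and positive on $[0,T]$ (explicitly $\eta_t=\lambda/(1+\tfrac{b^2\lambda}{r}(T-t))$, with no backward blow-up), and a standard verification argument identifies $u^\star_t=-\tfrac{b}{r}(\eta_t x^\star_t+s_t)$ as the unique minimizer in $\mathcal U$, admissibility following since the closed-loop state is Gaussian with bounded second moment.

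The closed-loop dynamics form a linear (Ornstein--Uhlenbeck-type) SDE driven by $w$ with Gaussian initial law, so $x^\star_t\sim\mathcal N(m_t,v_t)$, where $\dot m_t=\gamma_t-\tfrac{b^2}{r}\eta_t m_t-\tfrac{b^2}{r}s_t$ and $v_t$ obeys \eqref{fbode3}; since $\sigma^2>0$ and $\eta_t\ge 0$, one checks $v_t>0$ on $[0,T]$. By translation and scaling of quantiles, $Q(\alpha,\mathcal L(x^\star_t))=m_t+\sqrt{v_t}\,\mathcal X_\alpha$, so the consistency condition \eqref{MF-consistency-quadratic} becomes $q^\alpha_t=m_t+\sqrt{v_t}\,\mathcal X_\alpha=:\bar q^\alpha_t$. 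The decisive step is then a change of variables: set $\pi_t:=-\eta_t$ and $\phi^\alpha_t:=s_t+\eta_t\bar q^\alpha_t$. A short computation shows $\eta+\pi$ solves $\dot w=\tfrac{b^2}{r}w^2$ with $w_T=0$, hence $\pi\equiv-\eta$ solves \eqref{fbode2} with $\pi_T=-\lambda$; differentiating $\bar q^\alpha_t=m_t+\sqrt{v_t}\mathcal X_\alpha$ and inserting the mean/variance dynamics together with $s_t=\phi^\alpha_t-\eta_t\bar q^\alpha_t$ collapses the $\eta_t\bar q^\alpha_t$-terms and delivers \eqref{fbode5} (the derivative of $\sqrt{v_t}$ producing the $\tfrac{\sigma^2}{2}\mathcal X_\alpha/\sqrt{v_t}$ term) with $\bar q^\alpha_0=m_0+\nu\mathcal X_\alpha$; differentiating $\phi^\alpha_t$ and using the ODEs for $s$, $\eta$, $\bar q^\alpha$ yields \eqref{fbode4} with $\phi^\alpha_T=-\lambda\bar q^\alpha_T+\lambda\bar q^\alpha_T=0$. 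This proves the forward implication and simultaneously produces \eqref{lq-cntrl-inf-pop}--\eqref{lq-state-inf-pop}, since $u^\star_t=-\tfrac{b}{r}(\eta_t x^\star_t+s_t)=-\tfrac{b}{r}(\eta_t x^\star_t+\pi_t\bar q^\alpha_t+\phi^\alpha_t)$.

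For the converse I would start from a FBODE solution, set $q^\alpha:=\bar q^\alpha$ and $s_t:=\phi^\alpha_t-\eta_t\bar q^\alpha_t$, check that $s$ solves the linear ODE of the control step with $s_T=-\lambda\bar q^\alpha_T$ (so $u^\star$ is optimal for that step), and then verify consistency by showing $\tilde q_t:=m_t+\sqrt{v_t}\mathcal X_\alpha$ satisfies the same equation \eqref{fbode5} and initial value as $\bar q^\alpha_t$; the difference $e_t=\tilde q_t-\bar q^\alpha_t$ solves $\dot e_t=-\tfrac{b^2}{r}\eta_t e_t$ with $e_0=0$, so $e\equiv 0$ by uniqueness. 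I expect the only delicate point to be getting the algebra of the substitution $(\eta,s)\mapsto(\eta,\pi,\phi^\alpha)$ and the identity $\pi\equiv-\eta$ exactly right, as this is what converts the quantile fixed-point condition into a closed autonomous ODE for $\bar q^\alpha$. Notably, there is no genuine forward--backward coupling to overcome: \eqref{fbode1}, \eqref{fbode2} (backward) and \eqref{fbode3} (forward) decouple and are solved first, after which \eqref{fbode4} and \eqref{fbode5} are obtained by direct backward and forward integration, so existence and uniqueness of the FBODE solution---and hence of the equilibrium pair $(\bar q^\alpha,u^\star)$---follow immediately.
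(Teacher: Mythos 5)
Your proposal is correct, and its skeleton is the same as the paper's: fix the quantile path, solve the resulting LQ control problem, use Gaussianity of the closed-loop state to write $Q(\alpha,\mathcal{L}(x^\star_t))=m_t+\sqrt{v_t}\,\mathcal{X}_\alpha$, and recast the fixed point as the decoupled FBODE system. The differences are in technique. For the control step you use the HJB equation with a quadratic value-function ansatz plus verification, whereas the paper invokes the stochastic maximum principle and posits the adjoint ansatz $y_t=\eta_t x^\star_t+\theta^\alpha_t$; the two are equivalent here (your $s_t$ is exactly the paper's $\theta^\alpha_t$), though the maximum-principle formulation is the one the paper re-uses later in the threshold-based formulation and in the $\epsilon$-Nash argument. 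For the decoupling, you define $\pi_t:=-\eta_t$ and $\phi^\alpha_t:=s_t+\eta_t\bar q^\alpha_t$ outright and verify \eqref{fbode2}, \eqref{fbode4}, \eqref{fbode5} by differentiation, while the paper introduces $\theta^\alpha_t=\pi_t\bar q^\alpha_t+\phi^\alpha_t$ with \emph{unknown} coefficients, derives coupled ODEs for $(\pi,\phi^\alpha)$ by matching, and only afterwards discovers $\eta+\pi\equiv 0$; your route is shorter, the paper's shows where the decomposition comes from (and note that in your converse direction the identity $\pi\equiv-\eta$ must be re-derived from \eqref{fbode1}--\eqref{fbode2} via uniqueness for $w=\eta+\pi$, $\dot w=\tfrac{b^2}{r}w^2$, $w_T=0$, which your argument implicitly uses). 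You also add two things the paper leaves implicit: an explicit converse, where $s:=\phi^\alpha-\eta\bar q^\alpha$ is shown to solve the control-step ODE and the induced quantile $\tilde q_t=m_t+\sqrt{v_t}\,\mathcal{X}_\alpha$ is identified with $\bar q^\alpha_t$ through the linear equation $\dot e_t=-\tfrac{b^2}{r}\eta_t e_t$, $e_0=0$; and the closed-form Riccati solution $\eta_t=\lambda/\bigl(1+\tfrac{b^2\lambda}{r}(T-t)\bigr)$ together with positivity of $v_t$, which effectively absorbs the separate existence statement the paper defers to \Cref{existence} (cited to Riccati theory).
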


\begin{proof}
For a fixed quantile level $\alpha \in (0,1)$, we characterize the solution to the limiting MFG problem, described by \eqref{gmfg1}-\eqref{gmfg2} and \eqref{qmfg2}, by following the steps \textit{(i)-(ii)} as described at the beginning of \Cref{sec:lq-inf-pop}. We first address the stochastic control problem with the $\alpha$-quantile value fixed at $q^\alpha_T$. We then characterize the consistency condition \eqref{MF-consistency-general} via a set of FBODEs that includes the ODE that the limiting $\alpha$-quantile value $\{\bar{q}^\alpha_t: t \in \mfT\}$ satisfies at MFG equilibrium. The resulting system of FBODEs is then simplified to the one presented in the theorem.

\textit{(i) Stochastic Control Problem}: We fix the $\alpha$-quantile value at the terminal time $T$ at $q^\alpha_T$. We then introduce the Hamiltonian  
\begin{align}
    H(u, y) = \frac{r}{2} u^2 + y \left( s + b u \right). 
\end{align}
It is well known from the stochastic maximum principle that the solvability of the stochastic control problem, described by \eqref{gmfg1-fixed}-\eqref{gmfg2-fixed} with $g(x_T -  {q}^\alpha_T)= \frac{\lambda}{2} \left( x_T -  {q}^\alpha_T   \right)^2 $, is equivalent to the solvability of the set of forward-backward stochastic differential equations (FBSDEs) given by
\begin{align}
    d x^\star_t &= \left( \gamma_t - \frac{b^2}{r} y_t \right) dt + \sigma dw_t, \qquad\quad x^\star_0 = \xi,  \\
    d y_t &= z_t dw_t, \qquad\qquad\qquad\qquad\qquad\,\,\, y_T = \lambda (x^\star_T - q^\alpha_T), \label{adjoint-BSDE1}
\end{align}
where the optimal control at $t,\, t\in \mfT,$ is characterized as
\begin{align}
    u^\star_t = - \frac{b}{r} y_t. 
\end{align}
To solve the above set of FBSDEs, it is standard procedure to use an ansatz for the adjoint process $y=\{y_t: t\in \mfT\}$ given by
\begin{equation}
    y_t = \eta_t x^\star_t + \theta^\alpha_t,
\end{equation}
with the terminal conditions $ \eta_T = \lambda $ and $\theta^\alpha_T = - \lambda q^\alpha_T$ obtained from \eqref{adjoint-BSDE1}. 

We aim to characterize the processes $\eta$ and $\theta^\alpha$. For this purpose, we use Itô's lemma to derive the SDE that the ansatz satisfies. This leads to
\begin{align}
    d y_t &= x^\star_t \frac{d \eta_t}{dt} dt + \eta_t d x^\star_t + \frac{d \theta^\alpha_t}{dt} dt, \notag\\
    &= x^\star_t \frac{d \eta_t}{dt} dt + \frac{d \theta^\alpha_t}{dt} dt + \left( \eta_t \gamma_t - \frac{b^2}{r} \eta^2_t x^\star_t - \frac{b^2}{r} \theta^\alpha_t  \eta_t  \right) dt + \sigma \eta_t dw_t, \notag\\
    &= \left( \frac{d \eta_t}{dt} - \frac{b^2}{r} \eta^2_t  \right) x^\star_t dt + \left( \frac{d \theta^\alpha_t}{dt} + \eta_t \gamma_t - \frac{b^2}{r} \theta^\alpha_t \eta_t \right) dt + \sigma \eta_t dw_t.\label{adjoint-BSDE2}
\end{align}
In order for the two backward stochastic differential equations (BSDEs) satisfied by the adjoint process $y$, specifically \eqref{adjoint-BSDE1} and \eqref{adjoint-BSDE2}, to match, the following equations must hold for all $t \in \mfT$:
\begin{align}
z_t &= \sigma \eta_t,  \\
\frac{d \eta_t}{dt} &= \frac{b^2}{r} \eta^2_t, \quad\qquad\qquad\qquad \eta_T = \lambda,  \\
\frac{d \theta^\alpha_t}{dt} & = - \eta_t \gamma_t + \frac{b^2}{r} \eta_t \theta^\alpha_t,   \quad\quad\,\,\, \theta^\alpha_T = -\lambda q^\alpha_T,
\end{align}
where the last two backward ordinary differential equations (BODEs) characterzie the processes $\eta$ and $\theta^\alpha$, respectively. 

It follows that the optimal control at time $ t,\, t\in \mfT$, is given by  
\begin{equation}
u^\star_t = - \frac{b}{r} y_t = - \frac{b}{r} \eta_t x^\star_t - \frac{b}{r} \theta^\alpha_t,
\end{equation}
where
\begin{equation}
d x^\star_t = \left( \gamma_t - \frac{b^2}{r} \eta_t x^\star_t - \frac{b^2}{r} \theta^\alpha_t  \right) dt + \sigma dw_t, \quad x^\star_0 = \xi.\label{optimal-state-eq}
\end{equation}

\textit{(ii) Quantilized Mean Field Consistency Condition}: From \eqref{optimal-state-eq}, we observe that for all $t,\, t\in \mfT$, the law of the optimal state, denoted by $\mathcal{L}(x^\star_t)$, is Gaussian. Hence the $\alpha$-quantile value $q^\alpha_t$ can be expressed in terms of the mean and the variance of the optimal state at time $t$ as in
\begin{equation}
  q^\alpha_t = Q(\alpha, \mathcal{L}(x^\star_t )) = \mathbb{E} \left[ x^\star_t \right] + \mathcal{X}_\alpha \sqrt{ \mathbb{V} \left[ x^\star_t \right]}\,, \label{consistency-mean-variance}  
\end{equation}
where $\mathcal{X}_\alpha$ represents the $\alpha$-quantile value of the standard normal distribution. 

To obtain the temporal evolution equations for the mean and variance, we introduce the notations
\begin{equation}
    m_t = \mathbb{E} \left[ x^\star_t \right], \quad v_t = \mathbb{V} \left[ x^\star_t \right].
\end{equation}

For the mean process, $m$, we have 
\begin{align}
     \frac{d m_t}{dt} := \frac{d \mathbb{E} \left[ x^\star_t \right] }{dt} &=  \left( \gamma_t - \frac{b^2}{r} \eta_t \mathbb{E} \left[ x^\star_t \right] - \frac{b^2}{r} \theta^\alpha_t \right),  \notag\\
     &=   \left( \gamma_t - \frac{b^2}{r} \eta_t m_t - \frac{b^2}{r} \theta^\alpha_t \right),\label{mean-eq}
\end{align}
with  $\mathbb{E} \left[ x^\star_0 \right] = m_0$. For the variance process, $v$, we use Itô's lemma to get 
\begin{align}
    d \left( x^\star_t - m_t \right)^2 &=  \left[ \sigma^2 - 2 \frac{b^2}{r} \eta_t  \left( x^\star_t - m_t \right)^2 \right] dt + 2 \sigma \left( x^\star_t - m_t \right) dw_t, 
\end{align}
and then take the expectation of the solution, which satisfies 
\begin{align}
    \frac{d v_t}{dt} := \frac{d \mathbb{E}\left[ \left( x^\star_t - m_t \right)^2 \right] }{dt} &= \sigma^2 - 2 \frac{b^2}{r} \eta_t \mathbb{E}\left[  \left( x^\star_t - m_t \right)^2 \right]:= \sigma^2 - 2 \frac{b^2}{r} \eta_t v_t,\label{variance-eq}
\end{align}
with $v_0 = \nu^2$. From \eqref{consistency-mean-variance}-\eqref{variance-eq}, the $\alpha$-quantile value at equilibrium, $\bar{q}^\alpha_t$, is the solution to the ODE 
\begin{align}
    \frac{d \bar{q}^\alpha_t}{dt} &= \frac{d m_t}{dt} + \frac{\mathcal{X}_\alpha}{2 \sqrt{v_t}} \frac{d v_t}{dt}, \\
    &= \left( \gamma_t - \frac{b^2}{r} \eta_t m_t - \frac{b^2}{r} \theta^\alpha_t  \right) +  \frac{\mathcal{X}_\alpha}{2 \sqrt{v_t}} \left( \sigma^2 - 2 \frac{b^2}{r} \eta_t v_t \right), \\
    &= \gamma_t - \frac{b^2}{r} \eta_t \left( m_t + \mathcal{X}_\alpha \sqrt{v_t} \right) + \frac{\sigma^2 \mathcal{X}_\alpha}{2 \sqrt{v_t}} - \frac{b^2}{r} \theta^\alpha_t, \\
    &= \gamma_t - \frac{b^2}{r} \eta_t \bar{q}^\alpha_t + \frac{\sigma^2 \mathcal{X}_\alpha}{2 \sqrt{v_t}} - \frac{b^2}{r} \theta^\alpha_t, 
\end{align}
with $\bar{q}^\alpha_0 = m_0 + \mathcal{X}_\alpha \nu$.

 Putting together the previous two steps, we conclude that the solvability of the limiting MFG problem, described by \eqref{gmfg1}-\eqref{gmfg2} and \eqref{qmfg2}, is equivalent to the solvability of the set of FBODEs given by
    \begin{align}
        \frac{d \eta_t}{dt} &= \frac{b^2}{r} \eta^2_t,  &\eta_T &= \lambda,\label{CE1} \\
        \frac{dv_t}{dt} &= \sigma^2 - 2 \frac{b^2}{r} \eta_t v_t, &v_0 &= \nu^2, \label{CE2}\\
        \frac{d \bar{q}^\alpha_t}{dt} &= \gamma_t - \frac{b^2}{r} \eta_t \bar{q}^\alpha_t - \frac{b^2}{r} \theta^\alpha_t + \frac{\sigma^2}{2} \frac{\mathcal{X}_\alpha} {\sqrt{v_t}},  &\bar{q}^\alpha_0 &= m_0 + \nu \mathcal{X}_\alpha, \label{CE3}\\
        \frac{d \theta^\alpha_t}{dt} &= \frac{b^2}{r} \eta_t \theta^\alpha_t - \gamma_t \eta_t, &\theta^\alpha_T &=- \lambda \bar{q}^\alpha_T. \label{CE4}
    \end{align}

\textit{Simplification of Mean Field Consistency FBODEs:} From \eqref{CE1}-\eqref{CE4}, we observe that while the first two ODEs can be solved sequentially, the latter two are coupled and must be solved simultaneously. To obtain a fully decoupled system of FBODEs, we introduce an ansatz for the process $\theta^\alpha=\{\theta^\alpha_t: t\in \mfT\}$ given by  
\begin{equation}\label{ansatz2}
    \theta^\alpha_t = \pi_t \bar{q}^\alpha_t + \phi^\alpha_t, 
\end{equation}
with $\pi_T = - \lambda$ and $\phi^\alpha_T = 0$, which follow from the terminal condition of \eqref{CE4}.

We differentiate the ansatz given by \eqref{ansatz2} to get
\begin{align}
    \frac{d \theta^\alpha_t}{dt} &= \bar{q}^\alpha_t \frac{d \pi_t}{dt} + \pi_t \frac{d \bar{q}^\alpha_t}{dt} + \frac{d \phi^\alpha_t}{dt}, \notag\\
    &= \bar{q}^\alpha_t \left( \frac{d \pi_t}{dt} - \frac{b^2}{r} \pi^2_t - \frac{b^2}{r} \eta_t \pi_t \right) + \left( \frac{d \phi^\alpha_t}{dt} + \gamma_t \pi_t - \frac{b^2}{r} \pi_t \phi_t^\alpha + \frac{\sigma^2}{2} \frac{\mathcal{X}_\alpha}{\sqrt{v_t}} \pi_t \right).\label{CE4-v2}
\end{align}

Subsequently, we substitute the ansatz in the ODE given by \eqref{CE4} to obtain 
\begin{align}
    \frac{d \theta^\alpha_t}{dt} = \bar{q}^\alpha_t \left( \frac{b^2}{r} \eta_t \pi_t \right) + \left( \frac{b^2}{r} \eta_t \phi_t^\alpha - \gamma_t \eta_t \right).
\end{align}

Finally, we match the corresponding terms in the above ODE with those in \eqref{CE4-v2} to derive the ODEs satisfied by $\pi$ and $\phi^\alpha$, respectively, given by
\begin{align}
    \frac{d \pi_t}{dt} &= \frac{b^2}{r} \pi^2_t + 2 \frac{b^2}{r} \eta_t \pi_t, &\pi_T &= - \lambda, \label{CE5}\\
    \frac{d \phi^\alpha_t}{dt} &=  \left( \frac{b^2}{r} \phi^\alpha_t  - \gamma_t \right) (\eta_t + \pi_t) - \frac{\sigma^2}{2} \frac{\mathcal{X}_\alpha}{\sqrt{v_t}} \pi_t, &\phi_T^\alpha &= 0.
\end{align}

Moreover, substituting the ansatz \eqref{ansatz2} in the ODE \eqref{CE3} satisfied by the $\alpha$-quantile value results in 
\begin{align}
    \frac{d \bar{q}^\alpha_t}{dt} &= - \frac{b^2}{r} (\eta_t + \pi_t) \bar{q}^\alpha_t + \left( \gamma_t - \frac{b^2}{r} \phi^\alpha_t + \frac{\sigma^2}{2} \frac{\mathcal{X}_\alpha}{\sqrt{v_t}} \right). 
\end{align}

Furthermore, from \eqref{CE1} and \eqref{CE5}, we observe that
\begin{align}
    \frac{d (\eta_t + \pi_t)}{dt} &= \frac{b^2}{r} \eta^2_t + \frac{b^2}{r} \pi^2_t + 2 \frac{b^2}{r} \eta_t \pi_t = \frac{b^2}{r} (\eta_t + \pi_t)^2, \qquad 
\end{align}
with $\eta_T + \pi_T = - \lambda + \lambda = 0$, which results in $\eta_t + \pi_t = 0$ for all $t,\, t \in \mfT$. This observation allows for further simplification of the set of mean-field consistency FBODEs, as expressed by \eqref{fbode1}-\eqref{fbode5}. Hence, it follows that the solvability of the limiting quantilized MFG problem under study is equivalent to the solvability of the set of fully decoupled FBODEs given by \eqref{fbode1}-\eqref{fbode5}. 
Using the ansatz given by \eqref{ansatz2} for $\theta^\alpha$, the best-response strategy $\{ u^\star_t: t \in \mfT \}$ and the resulting state process $\{ x^\star_t: \ t \in \mfT\}$ for a representative agent at equilibrium are, respectively, given by \eqref{lq-cntrl-inf-pop} and \eqref{lq-state-inf-pop}.
The proof is complete. 

\end{proof}

The following proposition establishes the existence of a unique solution to the mean-field consistency equations given by \eqref{fbode1}-\eqref{fbode5}.
\begin{proposition}[Existence and Uniqueness of Solution to Mean-Field Consistency Equations]
    \label{existence}
    There is a unique solution $\{ \eta_t, \pi_t, v_t, \phi_t, \bar{q}^\alpha_t:  t \in \mfT  \}$ to the mean-field consistency equations given by \eqref{fbode1}-\eqref{fbode5}.
\end{proposition}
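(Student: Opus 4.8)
The plan is to exploit the \emph{triangular} structure of the system \eqref{fbode1}--\eqref{fbode5}: although it is written as a forward-backward system, the equations can in fact be solved one at a time in a fixed order, so that no genuine fixed-point argument coupling the forward and backward directions is required. The only point needing care is that $\sqrt{v_t}$ appears in the denominators of \eqref{fbode4} and \eqref{fbode5}, so I must first verify that $v_t$ stays strictly positive (indeed bounded away from zero) on $\mfT$ before those two equations can be integrated.

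First I would solve \eqref{fbode1}, a scalar separable (Riccati) equation whose explicit solution is $\eta_t = \lambda r / \big( b^2 \lambda (T-t) + r \big)$. Since the denominator is bounded below by $r > 0$ on $\mfT$, the function $\eta$ is well defined, continuous, and satisfies $0 < \eta_0 \le \eta_t \le \lambda$ throughout; in particular there is no finite-time blow-up. Next, invoking the identity $\eta_t + \pi_t = 0$ established in the proof of \Cref{thm-lgq}, I would note that $\pi_t = -\eta_t$ is the unique solution of \eqref{fbode2}: one checks directly that $-\eta$ satisfies both the Riccati equation and the terminal condition $\pi_T = -\lambda$, and uniqueness follows from the local Lipschitz dependence of the right-hand side on $\pi$ together with the boundedness of this solution on $\mfT$. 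Thus $\pi$ is determined, continuous, and bounded.

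With $\eta$ in hand, \eqref{fbode3} is a \emph{linear} first-order ODE in $v$ with continuous coefficient $-2(b^2/r)\eta_t$ and constant forcing $\sigma^2$. Its unique solution, obtained via the integrating factor, takes the form
\[
v_t = e^{-2 (b^2/r)\int_0^t \eta_s\,ds}\,\nu^2 + \int_0^t \sigma^2\, e^{-2 (b^2/r)\int_s^t \eta_u\,du}\,ds.
\]
Both summands are strictly positive because $\nu^2 > 0$, $\sigma^2 > 0$, and exponentials are positive; since $\eta$ is bounded, $v$ is moreover bounded above and below by positive constants on $\mfT$. Consequently $1/\sqrt{v_t}$ is continuous and bounded, which is precisely the regularity needed to render the remaining equations well posed. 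This positivity step is the only substantive obstacle; everything else is routine integration.

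Finally, with $v$ and $\pi$ known, the right-hand side of \eqref{fbode4} is a continuous bounded function of $t$ alone, so $\phi^\alpha$ is recovered by direct integration backward from $\phi^\alpha_T = 0$, yielding a unique continuous solution. Likewise, with $\phi^\alpha$ and $v$ known, the right-hand side of \eqref{fbode5} is a function of $t$ alone (using that $\gamma$ is deterministic and integrable on $\mfT$), so $\bar{q}^\alpha$ is recovered by direct integration forward from $\bar{q}^\alpha_0 = \mu + \nu \mathcal{X}_\alpha$, again uniquely. Assembling the five steps, the tuple $\{\eta_t, \pi_t, v_t, \phi^\alpha_t, \bar{q}^\alpha_t : t \in \mfT\}$ exists and is unique, which establishes the proposition.
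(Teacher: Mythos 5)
Your proof is correct and takes essentially the same route as the paper: both exploit the fully decoupled (triangular) structure of \eqref{fbode1}--\eqref{fbode5}, reduce everything to the scalar Riccati equation \eqref{fbode1}, identify $\pi_t = -\eta_t$, and recover $v$, $\phi^\alpha$, $\bar{q}^\alpha$ by direct integration. The only difference is in the details: the paper invokes \cite{freiling1996generalized} for existence and uniqueness of the Riccati solution, whereas you solve it explicitly ($\eta_t = \lambda r/(b^2\lambda(T-t)+r)$) and, usefully, you spell out the strict positivity and boundedness of $v_t$ needed for the $1/\sqrt{v_t}$ terms in \eqref{fbode4}--\eqref{fbode5} to be well defined --- a point the paper's proof leaves implicit.
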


\begin{proof}
We observe that the set of FBODEs given by \eqref{fbode1}-\eqref{fbode5} are fully decoupled. As a result it is enough to show that there is a unique solution $\{ \eta_t:  t \in \mfT  \}$ to (\ref{fbode1}). This is because we can verify that $\pi_t = - \eta_t, \forall t \in \mfT$, and calculate  $\{ v_t, \phi_t, \bar{q}^\alpha_t: t \in \mfT  \}$ directly from $ \{ \eta_t, \pi_t: t \in \mfT \} $. The existence and uniqueness of the solution to \eqref{fbode1} is an application of the results from \cite{freiling1996generalized}.
\end{proof}

\subsection{$\epsilon$-Nash Property}\label{sec:E-Nash}
In this section, we show that the best-response strategy given by \eqref{lq-cntrl-inf-pop} in \Cref{thm-lgq}, when employed by the agents in the original $N$-player game, described by \eqref{general-dynamics-fin-pop}-\eqref{general-sample-quantile} and \eqref{lq-cost-fin-pop}, results in an $\epsilon$-Nash equilibrium. 
\begin{theorem}
\label{eps-nash}
Consider the solution $\{ \eta_t, \pi_t, v_t, \phi_t, \bar{q}^\alpha_t: t \in \mfT  \}$ to the quantilized mean-field consistency equations given by \eqref{fbode1}-\eqref{fbode5}. Then, the set of strategies $\{ u^{i,\star}_t := f(t, x^{i,\star}_t, \alpha): t\in\mfT\}_{i=1}^{N}$ where
\begin{equation}
    f(t,x,\alpha) = - \frac{b}{r} \left( \eta_t x + \pi_t \bar{q}^\alpha_t + \phi^\alpha_t \right), \quad \forall \ (t,x,\alpha) \in \mfT \times \mathbb{R} \times (0,1),
\end{equation}
and
\begin{equation}
    d x^{i,\star}_t = \left[ \gamma_t - \frac{b^2}{r} \left( \eta_t x^{i, \star}_t + \pi_t \bar{q}^\alpha_t + \phi^\alpha_t \right) \right] dt + \sigma d w^i_t, \quad x^{i,\star}_0 = \xi^i \sim \mc{N}(m_0,\nu^2),
\end{equation}
forms an $\epsilon$-Nash equilibrium for the $N$-player game described by \eqref{general-dynamics-fin-pop}-\eqref{general-sample-quantile} and \eqref{lq-cost-fin-pop}. Moreover, the Nash approximation error is given by 
\begin{equation}\label{Nash-error}
    \epsilon^\alpha_N = \mathcal{O} \left( \sqrt{\frac{1}{N}} \  \frac{\sqrt{\alpha(1-\alpha)}}{p(T, \bar{q}^\alpha_T)} \right), 
\end{equation}
where $p(T, y)$ is the probability density function of the representative agent's terminal state at the MFG equilibrium. 
\end{theorem}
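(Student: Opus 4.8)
The plan is to carry out the standard verification route for an $\epsilon$-Nash equilibrium in three moves: replace, for the deviating agent, the stochastic sample quantile $q^{\alpha,[N]}_T$ by the deterministic limiting quantile $\bar q^\alpha_T$ at a quantifiable cost; invoke the optimality of $u^\star$ from \Cref{thm-lgq} for the limiting control problem with the quantile frozen at $\bar q^\alpha_T$; and chain the two estimates. The structural fact I would record first is that in \eqref{lq-dynamics-fin-pop} the agents are coupled \emph{only} through the terminal cost, since agent $j$'s state depends solely on $u^j,w^j,\xi^j$. Consequently, when every agent $j\neq i$ plays the MFG feedback, the terminal states $\{x^{j,\star}_T\}_{j\neq i}$ are i.i.d.\ with the common Gaussian law $\mathcal L(x^\star_T)$ of density $p(T,\cdot)$, and this remains true \emph{regardless} of how agent $i$ deviates; moreover, the consistency condition \eqref{MF-consistency-quadratic} gives $\bar q^\alpha_T = Q(\alpha,\mathcal L(x^\star_T))$, so the true quantile of that law is exactly $\bar q^\alpha_T$.

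Next I would establish the key concentration estimate
\[
\mathbb{E}\big[(q^{\alpha,[N]}_T-\bar q^\alpha_T)^2\big]=\mathcal O\!\left(\frac{1}{N}\,\frac{\alpha(1-\alpha)}{p(T,\bar q^\alpha_T)^2}\right),
\]
\emph{uniformly} over all admissible deviations $u^i$. Two ingredients enter. First, an insertion/sandwiching argument: since $q^{\alpha,[N]}_T$ is the $\lceil\alpha N\rceil$-th order statistic of the full $N$-sample, and inserting the single deviant value $x^i_T$ shifts ranks by at most one, $q^{\alpha,[N]}_T$ must coincide with either the $(\lceil\alpha N\rceil-1)$-th or the $\lceil\alpha N\rceil$-th order statistic of the i.i.d.\ subsample $\{x^{j,\star}_T\}_{j\neq i}$, irrespective of the deviant's (possibly large, possibly correlated) value. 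Second, the $L^2$ convergence of these near-$\alpha$ order statistics of i.i.d.\ Gaussians to $\bar q^\alpha_T$ at the sample-quantile asymptotic-variance rate $\alpha(1-\alpha)/(N\,p(T,\bar q^\alpha_T)^2)$, obtained from a Bahadur-type representation together with uniform-integrability and moment bounds for Gaussian order statistics. Taking $\epsilon^\alpha_N$ to be the square root of the displayed bound reproduces \eqref{Nash-error}.

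The cost comparison is then routine. For an arbitrary deviation $u^i$ against $u^{-i,\star}$, I would let $\hat J(u^i)$ denote the limiting cost \eqref{qmfg2} evaluated along $u^i$ with the quantile frozen at $\bar q^\alpha_T$; since the dynamics \eqref{lq-dynamics-fin-pop} and \eqref{qmfg1} coincide for agent $i$ driven by its own $w^i,\xi^i$, the running cost cancels and only the terminal term differs, giving
\[
J^{[N]}_i(u^i,u^{-i,\star})-\hat J(u^i)=\tfrac{\lambda}{2}\,\mathbb{E}\big[(\bar q^\alpha_T-q^{\alpha,[N]}_T)(2x^i_T-q^{\alpha,[N]}_T-\bar q^\alpha_T)\big].
\]
By Cauchy--Schwarz this is bounded by $\tfrac{\lambda}{2}\,\|q^{\alpha,[N]}_T-\bar q^\alpha_T\|_{L^2}\,\|2x^i_T-q^{\alpha,[N]}_T-\bar q^\alpha_T\|_{L^2}$; the first factor is $\mathcal O(\epsilon^\alpha_N)$, and the second is finite once $\mathbb{E}[(x^i_T)^2]$ is controlled. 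Here I would use the a priori reduction: it suffices to treat deviations with $J^{[N]}_i(u^i,u^{-i,\star})\le J^{[N]}_i(u^{i,\star},u^{-i,\star})$, since otherwise the inequality is trivial. This bounds $\tfrac r2\,\mathbb{E}\int_0^T (u^i_t)^2\,dt$ by the order-one MFG cost, and a Gr\"onwall estimate for the linear SDE \eqref{lq-dynamics-fin-pop} then bounds $\mathbb{E}[(x^i_T)^2]$ uniformly in $N$ and in the deviation — this is precisely the step that \emph{derives} rather than assumes the square-integrability control highlighted in the introduction. Applying the resulting inequality $|J^{[N]}_i(u^i,u^{-i,\star})-\hat J(u^i)|\le C\epsilon^\alpha_N$ both to $u^i$ and to $u^{i,\star}$, and inserting $\hat J(u^\star)\le\hat J(u^i)$ (the optimality of $u^\star$ from \Cref{thm-lgq}, extended from the representative-agent filtration to the larger $N$-player filtration, which is immediate since extra independent information cannot lower a strictly convex quadratic cost), yields $J^{[N]}_i(u^{i,\star},u^{-i,\star})\le J^{[N]}_i(u^i,u^{-i,\star})+2C\epsilon^\alpha_N$.

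The step I expect to be the main obstacle is the uniform $L^2$ quantile-concentration estimate carrying the explicit constant $\alpha(1-\alpha)/p(T,\bar q^\alpha_T)^2$. Convergence in distribution of the sample quantile is classical, but upgrading it to a moment bound with the sharp asymptotic variance, and making it uniform over arbitrary (full-information, a priori unbounded) deviations through the sandwiching lemma, is where the real work lies. The Gaussianity of $\mathcal L(x^\star_T)$ — which guarantees a smooth density $p(T,\cdot)$ that is strictly positive and bounded away from zero near $\bar q^\alpha_T$ — is what keeps the Bahadur-type moment control tractable and lets the rate \eqref{Nash-error} emerge with its stated dependence on $\alpha$ and on $p(T,\bar q^\alpha_T)$.
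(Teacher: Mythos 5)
Your proposal is correct in substance and shares the paper's overall skeleton --- approximate the finite-$N$ cost by a limiting cost whose error is driven by $\mathbb{E}\bigl[(q^{\alpha,[N]}_T-\bar q^\alpha_T)^2\bigr]^{1/2}$, invoke optimality of $u^\star$ in the limit, and extract the rate \eqref{Nash-error} from sample-quantile asymptotics --- but the middle of your argument takes a genuinely different route. The paper never handles an arbitrary deviation directly: it uses the stochastic maximum principle to compute the \emph{exact} best response $\hat u^i$ of agent $i$ against $u^{-i,\star}$ in the $N$-player game (a feedback involving a process $\hat\theta^{\alpha,[N]}$ with terminal condition $-\lambda q^{\alpha,[N]}_T$), then decomposes the potential gain as $I^\star + I + \hat I$, where $I = J(u^\star,\alpha)-J(\hat u,\alpha)\le 0$ by optimality and $I^\star$, $\hat I$ compare finite-$N$ and limiting costs along $u^{i,\star}$ and $\hat u^i$; since the comparator is this specific $\hat u^i$, the moment $\mathbb{E}[(\hat x^i_T)^2]$ is finite by construction, which is how the paper avoids assuming square-integrability of deviations. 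You instead keep the deviation arbitrary, restrict a priori to cost-improving deviations, bound the deviator's energy by the order-one equilibrium cost and then $\mathbb{E}[(x^i_T)^2]$ by Gr\"onwall --- a different, equally valid way of \emph{deriving} the same integrability. Two points where your route is in fact more careful than the paper's: first, your rank-sandwiching lemma addresses the fact that under a deviation the $N$-sample contains one non-i.i.d.\ point, so the quantile CLT cannot be applied verbatim to $q^{\alpha,[N]}_T$; the paper applies the CLT from Serfling without comment on this contamination. Second, you flag that convergence in law must be upgraded to an $L^2$ bound (Bahadur representation plus uniform integrability); the paper cites only the distributional CLT, which does not by itself control second moments. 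One small repair to your sandwich step: the full-sample quantile need not \emph{coincide} with the $(\lceil\alpha N\rceil-1)$-th or $\lceil\alpha N\rceil$-th order statistic of the $(N-1)$-subsample --- when the deviant's value falls strictly between them the quantile equals the deviant's value itself --- but it is always \emph{sandwiched} between those two order statistics, which is all your $L^2$ estimate needs. What the paper's approach buys is a clean algebraic comparison (the running costs along $\hat u$ and $\hat u^i$ cancel exactly because $\mathcal L(\hat x_t)=\mathcal L(\hat x^i_t)$); what yours buys is a self-contained treatment of uniformity over deviations and of the moment upgrade, and it makes explicit the relaxation of the square-integrability condition on deviating strategies that the introduction advertises.
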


\begin{proof}
We show that when agent $i$  unilaterally chooses a strategy $\{ u^{i}_t, t\in\mfT \}$ deviating from the set of strategies $\{ u^{k, \star}_t:  t\in\mfT\}_{k=1}^N$, such that
\begin{align}
    0 \leq J_{i}^{[N]}(u^{i,\star},u^{-i,\star}, \alpha) - J_{i}^{[N]}(u^{i},u^{-i,\star}, \alpha),
\end{align}
then the agent may benefit at most by $\epsilon^\alpha_N$, i.e. 
\begin{align}
    0 \leq J_{i}^{[N]}(u^{i,\star},u^{-i,\star}, \alpha) - J_{i}^{[N]}(u^{i},u^{-i,\star}, \alpha) \leq \epsilon^\alpha_N,
\end{align}
where $\epsilon^\alpha_N \rightarrow 0$  as $N \rightarrow \infty$. 

\textit{Step 1:} For any deviating strategy $\{ u^{i}_t: t\in\mfT \}$ which leads to a lower cost for agent $i$, we have 
\begin{align}
    0 \leq & \ J_{i}^{[N]}(u^{i,\star},u^{-i,\star}, \alpha) - J_{i}^{[N]}(u^{i},u^{-i,\star}, \alpha) \\
    & \leq \ J_{i}^{[N]}(u^{i,\star},u^{-i,\star}, \alpha) - \inf_{u \in \mc{U}^{[N]}}   J_{i}^{[N]}(u ,u^{-i,\star}, \alpha ). 
\end{align}

From the stochastic maximum principle, there exists a strategy $\{ \hat{u}^{i}_t, \ t\in\mfT \}$ such that 
\begin{align}
    J_{i}^{[N]}(\hat{u}^{i},u^{-i,\star}, \alpha) = \inf_{u \in \mc{U}^{[N]}}   J_{i}^{[N]}(u ,u^{-i,\star}, \alpha ), 
\end{align}
which is given by
\begin{align}
    \hat{u}^{i}_t = \hat{f}^{[N]}(t, \hat{x}^{i}_t, \alpha) := - \frac{b}{r} \left( \eta_t \hat{x}^{i}_t + \hat{\theta}^{\alpha,[N]}_t  \right),
\end{align}
where $\eta$ satisfies \eqref{fbode1} and 
\begin{align}
    d \hat{\theta}^{\alpha,[N]}_t  &= \left[ - \eta_t \gamma_t + \frac{b^2}{r} \eta_t \hat{\theta}^{\alpha,[N]}_t  \right] dt,   &\hat{\theta}^{\alpha,[N]}_T &= - \lambda q^{\alpha,[N]}_T,  \\
    d \hat{x}^{i}_t &= \left[ \gamma_t + b \hat{f}^{[N]}(t, \hat{x}^{i}_t, \alpha)  \right] dt + \sigma d w^i_t,  &\hat{x}^{i}_0 &= \xi^i \sim \mc{N}(m_0,\nu^2).
\end{align}

Therefore, 
\begin{align}
   0 \leq & \ J_{i}^{[N]}(u^{i,\star},u^{-i,\star}, \alpha) - J_{i}^{[N]}(u^{i},u^{-i,\star}, \alpha) \\
   & \leq  \ J_{i}^{[N]}(u^{i,\star},u^{-i,\star}, \alpha) - J_{i}^{[N]}(\hat{u}^{i},u^{-i,\star}, \alpha).
\end{align}

For convenience, we recall the dynamics and the cost functional of a representative agent at the MFG equilibrium, where the optimal strategy $\{ u^{\star}_t := f(t, x^{\star}_t, \alpha), \ t\in\mfT \}$ is used, as 
\begin{align}
    d x^{\star}_t &= \left[ \gamma_t + b f(t, x^{\star}_t, \alpha)  \right] dt + \sigma d w_t, \quad \quad x^{\star}_0 = \xi \sim \mc{N}(m_0,\nu^2),\\
    J(u^\star, \alpha) &= \mathbb{E} \left[ \int_0^T \frac{r}{2} \left( f(t, x^{\star}_t, \alpha)\right)^2 dt +  \frac{\lambda}{2} \left( x^{\star}_T -  \bar{q}^\alpha_T   \right)^2   \right].  
\end{align} 

We introduce the dynamics and the cost functional of a representative agent employing the best deviating feedback strategy obtained  in the $N$-player game setting, i.e. $\{\hat{u}_t := \hat{f}^{[N]}(t, \hat{x}_t, \alpha): t\in\mfT \}$, as 
\begin{align}
    d \hat{x}_t &= \left[ \gamma_t + b \hat{f}^{[N]}(t, \hat{x}_t, \alpha)  \right] dt + \sigma d w_t, \quad\quad \hat{x}_0 = \xi \sim \mathcal{N}(m_0,\nu^2),\\
    J(\hat{u},\alpha) &= \mathbb{E} \left[ \int_0^T \frac{r}{2} \left( \hat{f}^{[N]}(t, \hat{x}_t, \alpha)\right)^2 dt +  \frac{\lambda}{2} \left( \hat{x}_T -  \bar{q}^\alpha_T   \right)^2 \right].  
\end{align} 
We observe that both the sample and limiting $\alpha$-quantiles are present in the model described above.

We then have 
\begin{align}
   0 \leq & \ J_{i}^{[N]}(u^{i,\star},u^{-i,\star}, \alpha) - J_{i}^{[N]}(u^{i},u^{-i,\star}, \alpha) \\
   & \leq  \ J_{i}^{[N]}(u^{i,\star},u^{-i,\star}, \alpha) - J_{i}^{[N]}(\hat{u}^{i},u^{-i,\star}, \alpha) \\
   & = I^{\star} + I + \hat{I}, 
\end{align}
where 
\begin{align}
    I^{\star} &= J_{i}^{[N]}(u^{i,\star},u^{-i,\star}, \alpha) - J(u^\star, \alpha),  \\
    I &=  J(u^\star, \alpha) - J(\hat{u}, \alpha), \\
    \hat{I} &=  J(\hat{u}, \alpha) -  J_{i}^{[N]}(\hat{u}^{i},u^{-i,\star}, \alpha).  \\ 
\end{align}

\textit{Step 2:} We derive estimates for $I^{\star}, I, $ and $\hat{I}$. First, we observe that since $u^{\star}$ is the optimal strategy for the cost functional $J(u,\alpha)$, it holds that 
\begin{align}
    I = J(u^\star, \alpha) - J(\hat{u}, \alpha) \leq 0 . 
\end{align}

Second, we observe that $\mathcal{L}(\hat{x}_t) = \mathcal{L}(\hat{x}^i_t), \ \forall \ t \in \mfT$, resulting in 

\begin{align}
    \hat{I} &=  J(\hat{u}, \alpha) -  J_{i}^{[N]}(\hat{u}^{i},u^{-i,\star}, \alpha) \notag \allowdisplaybreaks\\
    & = \mathbb{E} \left[ \int_0^T \frac{r}{2} \left( \hat{f}^{[N]}(t, \hat{x}_t, \alpha)\right)^2 dt +  \frac{\lambda}{2} \left( \hat{x}_T -  \bar{q}^\alpha_T   \right)^2   \right]  - \mathbb{E} \left[ \int_0^T \frac{r}{2} \left( \hat{f}^{[N]}(t, \hat{x}^{i}_t, \alpha)\right)^2 dt +  \frac{\lambda}{2} \left( \hat{x}^{i}_T -  q^{\alpha,[N]}_T   \right)^2   \right] \notag \allowdisplaybreaks\\
    & = \mathbb{E} \left[  \frac{\lambda}{2} \left( \left( \bar{q}^\alpha_T \right)^2 - \left( q^{\alpha,[N]}_T \right)^2 \right) + \lambda \hat{x}^i_T \left( q^{\alpha,[N]}_T - \bar{q}^\alpha_T \right) \right] \notag \allowdisplaybreaks\\
    & = \frac{\lambda}{2} \mathbb{E} \left[ \left( \bar{q}^\alpha_T - q^{\alpha,[N]}_T \right) \left( \bar{q}^\alpha_T + q^{\alpha,[N]}_T \right) \right] + \lambda \mathbb{E} \left[ \hat{x}^i_T \left( q^{\alpha,[N]}_T - \bar{q}^\alpha_T \right)\right] \notag \allowdisplaybreaks\\
    & \leq \frac{\lambda}{2} \mathbb{E} \left[ \left( q^{\alpha,[N]}_T - \bar{q}^\alpha_T \right)^2 \right]^{\frac{1}{2}} \mathbb{E} \left[ \left( q^{\alpha,[N]}_T + \bar{q}^\alpha_T \right)^2 \right]^{\frac{1}{2}} + \lambda \mathbb{E} \left[ \left( \hat{x}^{i}_T \right)^2 \right]^{\frac{1}{2} } \mathbb{E} \left[ \left( q^{\alpha,[N]}_T - \bar{q}^\alpha_T \right)^2 \right]^{\frac{1}{2}} \notag \allowdisplaybreaks\\
    & \leq \mathbb{E} \left[ \left( q^{\alpha,[N]}_T - \bar{q}^\alpha_T \right)^2 \right]^{\frac{1}{2}} \left\lbrace \frac{\lambda}{2} \mathbb{E} \left[ \left( q^{\alpha,[N]}_T + \bar{q}^\alpha_T \right)^2 \right]^{\frac{1}{2}} +  \lambda \mathbb{E} \left[ \left( \hat{x}^{i}_T \right)^2 \right]^{\frac{1}{2} } \right\rbrace \notag \allowdisplaybreaks\\
    & \leq \mathbb{E} \left[ \left( q^{\alpha,[N]}_T - \bar{q}^\alpha_T \right)^2 \right]^{\frac{1}{2}} \left\lbrace \frac{\sqrt{2}}{2} \lambda \left( \mathbb{E} \left[ \left( q^{\alpha,[N]}_T - \bar{q}^\alpha_T \right)^2 \right] + 4 \mathbb{E} \left[ \left( \bar{q}^\alpha_T \right)^2 \right]\right)^{\frac{1}{2}}   +  \lambda \mathbb{E} \left[ \left( \hat{x}^{i}_T \right)^2 \right]^{\frac{1}{2} }  \right\rbrace,
\end{align}
where to obtain the first inequality, the Cauchy-Schwarz inequality is used and to derive the last inequality, \( \bar{q}_T^{\alpha} \) is subtracted and added, and the property \( |a + b|^2 \leq |a|^2 + |b|^2 \) for all \( a, b \in \mathbb{R} \), is applied.

Third, we similarly obtain
\begin{align}
    I^{\star} &= J_{i}^{[N]}(u^{i,\star},u^{-i,\star}, \alpha) - J(u^\star, \alpha) \notag \\
    & \leq \mathbb{E} \left[ \left( q^{\alpha,[N]}_T - \bar{q}^\alpha_T \right)^2 \right]^{\frac{1}{2}} \left\lbrace \frac{\sqrt{2}}{2} \lambda \left( \mathbb{E} \left[ \left( q^{\alpha,[N]}_T - \bar{q}^\alpha_T \right)^2 \right] + 4 \mathbb{E} \left[ \left( \bar{q}^\alpha_T \right)^2 \right]\right)^{\frac{1}{2}}   +  \lambda \mathbb{E} \left[ \left( x^{i, \star}_T \right)^2 \right]^{\frac{1}{2} }  \right\rbrace.
\end{align}

Therefore, it follows that
\begin{align}
   0 \leq & \ J_{i}^{[N]}(u^{i,\star},u^{-i,\star}, \alpha) - J_{i}^{[N]}(u^{i},u^{-i,\star}, \alpha) \notag\\
   & = I^{\star} + I + \hat{I} \leq \epsilon^\alpha_N,
\end{align}
where
\begin{align}
    \epsilon^\alpha_N & := \mathbb{E} \left[ \left( q^{\alpha,[N]}_T - \bar{q}^\alpha_T \right)^2 \right]^{\frac{1}{2}} \notag \\
    & \quad \quad  \times \left\lbrace \sqrt{2} \lambda \left( \mathbb{E} \left[ \left( q^{\alpha,[N]}_T - \bar{q}^\alpha_T \right)^2 \right] + 4 \mathbb{E} \left[ \left( \bar{q}^\alpha_T \right)^2 \right]\right)^{\frac{1}{2}}   +  \lambda \mathbb{E} \left[ \left( x^{i, \star}_T \right)^2 \right]^{\frac{1}{2} } + \lambda \mathbb{E} \left[ \left( \hat{x}^{i}_T \right)^2 \right]^{\frac{1}{2} }  \right\rbrace .
\end{align}

\textit{Step 3:} From \cite[p. 77]{serfling2009approximation}, by an application of the central limit theorem to quantiles we obtain 
\begin{align}
    \mathcal{L} \left( {q}^{\alpha,[N]}_T \right) \longrightarrow \mathcal{N} \left( \bar{q}^\alpha_T, \ \frac{\alpha (1 - \alpha)}{N p(T, \bar{q}^\alpha_T)} \right), 
\end{align}
where $p(T,y)$ is the probability density function of the limiting terminal state at the MFG equilibrium given by
\begin{equation}
    x^{\star}_T = \xi + \int_0^T \left( \gamma_t - \frac{b^2}{r} \left( \eta_t x^{\star}_t + \pi_t \bar{q}^\alpha_t + \phi^\alpha_t \right) \right) dt + \sigma  w_T.
\end{equation}

It follows that 

\begin{align}
    \epsilon^\alpha_N  
    & = \mathcal{O} \left( \sqrt{\frac{1}{N}} \  \frac{\sqrt{\alpha(1-\alpha)}}{\mu(T, \bar{q}^\alpha_T)} \right)  \left\lbrace  \mathcal{O} \left( \sqrt{ \frac{\alpha(1-\alpha)}{\mu(T, \bar{q}^\alpha_T)^2} \frac{1}{N} + 1 }    \right) +  \mathcal{O} \left( 1 \right) \right\rbrace \notag\\
    & = \mathcal{O} \left( \sqrt{\frac{1}{N}} \  \frac{\sqrt{\alpha(1-\alpha)}}{\mu(T, \bar{q}^\alpha_T)} \right). 
\end{align}
    
\end{proof}

\section{Threshold-Based Formulation}\label{sec:threshold-general}
In the threshold-based formulation, it is assumed that following the coordinator's announcement, the agents aim for their terminal state to be \textit{at least} equal to the sample $\alpha$-quantile value of the terminal state of the participating agents. In this scenario, the sample $\alpha$-quantile at the terminal time $T$ may be viewed as the threshold for success by agents, which is determined by the collective performance of all participating agents. The agents that achieve or surpass this threshold at the terminal time $T$ fulfill the selection condition set by the coordinator.

\subsection{$N$-Player Game Model}
In the threshold-based scenario, the dynamics and the cost functional of agent $i,\, i\in \mc{N}$, are, respectively, given by \eqref{general-dynamics-fin-pop} and \eqref{general-cost-fin-pop}-\eqref{general-sample-quantile}, where
\begin{equation}\label{cost-fin-pop-VI}
     g\left(x_T^i -  q^{\alpha, [N]}_T\right):=\frac{\lambda}{2} \left( x_T^i -  q^{\alpha, [N]}_T  \right)^2 \mathds{1}_{\left\{x_T^i <   q^{\alpha, [N]}_T\right\}}   , 
\end{equation}
where $\lambda \in \mb{R}_{>0}$ is a positive constant. According to the terminal cost for agent $i$, if its  state $x^i_T$ is below the sample $\alpha$-quantile value $q^{\alpha, [N]}_T$ at the terminal time $T$, it incurs a cost. This cost is increasing with the distance of the state of the start-up $x^i_T$ to the required threshold $q^{\alpha, [N]}_T$. However, if its state $x^i_T$ is equal to or exceeds the sample $\alpha$-quantile value $q^{\alpha, [N]}_T$ at terminal time $T$, no cost is incurred. Overall, the cost functional given by \eqref{cost-fin-pop-VI} models the fact that agent $i$ aims for its terminal state $x^i_T$ to be in the top $(1-\alpha) \times 100\%$ of the $N$ agents by applying efforts $u^i_t$ throughout the time interval $\mc{T}$. Clearly, the agent's strategy $u^i$ depends on the strategies of other agents through the sample $\alpha$-quantile value $q^{\alpha, [N]}_T$ at time $T$. 

It is desirable to identify an approximate Nash equilibrium for this model. However, this task proves to be mathematically challenging, as detailed in the following subsection. Consequently, we focus on the limiting case as the number of agents \(N\) approaches infinity, combining analytical and numerical methods. Specifically, we characterize an equilibrium strategy in a semi-explicit form and use numerical methods to fully describe it.
\subsection{Limiting Model}\label{sec:VC-nonlinear-sol}   
 We address the limiting problem, where the number of agents tends to infinity, through a similar two-step process as detailed in \Cref{sec:general-inf-pop}.
 
The resulting stochastic control problem for a representative agent is described by \eqref{gmfg1-fixed}-\eqref{gmfg2-fixed}, where
    \begin{gather}
     g\left(x_T -  {q}^{\alpha}_T\right)=\frac{\lambda}{2}\Big(x_T -  {q}^{\alpha}_T\Big)^2 \mathds{1}_{\left\{x_T <   {q}^{\alpha}_T\right\}}   .\label{VI-cost-nonlinear}
    \end{gather}
Similarly to the target-based formulation, this problem involves the limit $\alpha$-quantile value instead of the sample $\alpha$-quantile value. The limit $\alpha$-quantile value is a deterministic quantity and by assuming that it is fixed in this step, the problem is significantly simplified. This is because the success threshold becomes both deterministic and known a priori. Subsequently, a representative agents chooses its best-response strategy to achieve this set threshold by the terminal time $T$.

\begin{proposition} Suppose that the optimal control problem of a representative agent in the limit, as the number of agents $N$ tends to infinity, is given by \eqref{gmfg1-fixed}-\eqref{gmfg2-fixed} and \eqref{VI-cost-nonlinear}, and the $\alpha$-quantile value process $q^\alpha = \{q^{\alpha}_t: t \in \mfT\}$ is fixed. Then, the optimal (best-response) strategy of a representative agent is given by  
        \begin{equation}\label{optimal-strategy-limit}
            u^\star_t = -\frac{b}{r} y^\star_t, 
        \end{equation} 
        where a complete characterization leads to seeking the triple \((
        x^\star_t, y^\star_t, z^\star_t)\) satisfying 
        \begin{align}
            dx^\star_t &= \left[\gamma_t - \tfrac{b^2}{r} y^\star_t \right] dt + \sigma dw_t,\qquad x^\star_0 \sim \mathcal{N}(m_0, \nu^2),\label{FSDE}\\
            dy^\star_t &= z^\star_t dw_t, \qquad \qquad \qquad \qquad \,\,\,\, y^\star_T = \lambda\left( x_T^\star -  q^{\alpha}_T  \right) \mathds{1}_{\left\{x_T^\star <   q^{\alpha}_T\right\}}.\label{BSDE}
        \end{align}
        Moreover, the optimal strategy admits the following equivalent representation 
        \begin{align}\label{opt-cntrl-equiv}
            u^\star_t =\frac{b}{r} \lambda \left(  q^{\alpha}_T\,\mathbb{E}\left[  \mathds{1}_{\left\{x_T^\star <   q^{\alpha}_T\right\}} \Big| \mathcal{F}_t\right] - \mathbb{E}\left[x_T^\star   \mathds{1}_{\left\{x_T^\star <   q^{\alpha}_T\right\}} \Big| \mathcal{F}_t\right] \right).
        \end{align}
\end{proposition}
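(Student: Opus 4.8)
The plan is to apply the stochastic maximum principle, exactly as in the proof of \Cref{thm-lgq}, and then to exploit the degenerate (driftless) structure of the adjoint equation to obtain the representation \eqref{opt-cntrl-equiv}. First I would introduce the Hamiltonian
\[
H(t,x,u,y) = \frac{r}{2} u^2 + y \left( \gamma_t + b u \right),
\]
which coincides with the one used in the target-based formulation, since neither the dynamics \eqref{VI-dyn-nonlinear} nor the running cost depend on the state. Minimizing over $u$ gives the first-order condition $r u + b y = 0$, hence $u^\star_t = -\tfrac{b}{r} y^\star_t$, which is \eqref{optimal-strategy-limit}. Because $H$ is independent of $x$, the adjoint equation reduces to $dy^\star_t = z^\star_t \, dw_t$, and the terminal condition is $y^\star_T = \partial_x g(x^\star_T)$ with $g(x) = \tfrac{\lambda}{2}(x - q^\alpha_T)^2 \mathds{1}_{\{x < q^\alpha_T\}}$. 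A direct computation shows that $g$ is continuously differentiable with $\partial_x g(x) = \lambda(x - q^\alpha_T)\mathds{1}_{\{x < q^\alpha_T\}}$, which is precisely the terminal condition appearing in \eqref{BSDE}, thereby producing the coupled system \eqref{FSDE}-\eqref{BSDE}.

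The delicate point is sufficiency. Although $g$ fails to be twice differentiable at $x = q^\alpha_T$, I would check that $g$ is convex (it is the composition of the convex map $\tfrac{\lambda}{2}(\cdot)^2$ with a one-sided truncation, and $\partial_x g$ is non-decreasing) and that $\partial_x g$ is globally Lipschitz with constant $\lambda$. Combined with the convexity of the running cost in $u$ and the affine dependence of the dynamics on $(x,u)$, this convexity upgrades the maximum-principle conditions from necessary to sufficient, so that any solution of \eqref{FSDE}-\eqref{BSDE} does deliver the optimal control. Square-integrability of $y^\star_T$, and hence admissibility of $u^\star$, follows from the bound $|\partial_x g(x)| \le \lambda |x - q^\alpha_T|$ together with $x^\star_T \in L^2$.

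Finally, for the equivalent representation \eqref{opt-cntrl-equiv}, I would use the fact that the adjoint equation in \eqref{BSDE} has zero drift, so $y^\star$ is a square-integrable martingale and $y^\star_t = \mathbb{E}\!\left[ y^\star_T \mid \mathcal{F}_t \right]$. Substituting the terminal condition and using linearity of conditional expectation gives
\[
y^\star_t = \lambda\, \mathbb{E}\!\left[ x^\star_T \mathds{1}_{\{x^\star_T < q^\alpha_T\}} \,\Big|\, \mathcal{F}_t \right] - \lambda q^\alpha_T\, \mathbb{E}\!\left[ \mathds{1}_{\{x^\star_T < q^\alpha_T\}} \,\Big|\, \mathcal{F}_t \right],
\]
and inserting this into $u^\star_t = -\tfrac{b}{r} y^\star_t$ yields \eqref{opt-cntrl-equiv}.

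The main obstacle throughout is the non-smoothness of the terminal cost: it must be handled carefully both in identifying the correct terminal condition for the adjoint process and in justifying the sufficiency of the first-order conditions, since the standard smooth verification theorem does not apply verbatim. I would also emphasize that the proposition is a \emph{characterization}: unlike the target-based case, the resulting forward-backward system is genuinely coupled through the indicator in $y^\star_T$, so it admits no closed-form solution and existence of the triple $(x^\star, y^\star, z^\star)$ must ultimately be addressed by the numerical fixed-point scheme discussed later, rather than solved explicitly here.
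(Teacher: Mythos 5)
Your proposal is correct and follows essentially the same route as the paper's proof: the stochastic maximum principle yields $u^\star_t = -\tfrac{b}{r}y^\star_t$ together with the FBSDE system \eqref{FSDE}--\eqref{BSDE}, and the driftless adjoint equation makes $y^\star$ a martingale, so conditioning on $\mathcal{F}_t$ and rearranging gives \eqref{opt-cntrl-equiv}. Your additional checks — that the terminal cost is $C^1$ with Lipschitz, non-decreasing derivative, hence convex, so the first-order conditions are sufficient — make explicit what the paper's terse invocation of the maximum principle leaves implicit, but they do not change the argument's structure.
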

\begin{proof}
Using the stochastic maximum principle, the optimal strategy is characterized in a semi-explicit form as given by \eqref{optimal-strategy-limit}, where the adjoint process $\{y^\star_t: t \in \mfT\}$ is fully characterized by solving the set of forward-backward SDEs (FBSDEs) given by \eqref{FSDE}-\eqref{BSDE}. From \eqref{BSDE}, we observe that the adjoint process $y^\ast$ is a martingale and hence an equivalent representation of the optimal strategy is given by 
        \begin{equation*}
            u^\star_t = -\frac{b}{r} \lambda\,\mathbb{E}\left[\left( x_T^\star -  q^{\alpha}_T  \right) \mathds{1}_{\left\{x_T^\star <   q^{\alpha}_T\right\}} \Big| \mathcal{F}_t\right],
        \end{equation*}
     where the terms may be rearranged as in \eqref{opt-cntrl-equiv}.
\end{proof}
    The first expectation in \eqref{opt-cntrl-equiv} represents the conditional failure probability of a representative agent given the information available up to time $t$. The second expectation in \eqref{opt-cntrl-equiv} represents the expected terminal state of the agent when it falls below the required threshold, given the information available up to time $t$. Hence, for a fixed quantile level, $\alpha$, and based on the information available at time $t$, if the terminal state of the agent falls below the required threshold, the effort exerted by the start-up at time $t$ increases as the expected deviation of its terminal state from the threshold grows. Conversely, if the agent's probability of failure is zero, its effort reduces to zero.
        
Although we characterized the solution to the corresponding optimal control problem in a semi-explicit form, establishing the existence of a unique fixed point for the associated quantilized consistency condition~\eqref{MF-consistency-general} poses significant theoretical challenges, primarily due to the irregularity of the quantile function with respect to the underlying law~$\mathcal{L}(x_t^\star)$. While this remains an open mathematical question, we defer its investigation to future research. In the present work, we address this quantilized mean-field consistency equation through the numerical scheme depicted in \Cref{fig:numerical-scheme}. For similar reasons, the convergence analysis of the numerical algorithm remains also challenging. 

\begin{figure}
\centering
{\scriptsize   
\begin{tikzpicture}[node distance=4cm,auto,scale=0.7, transform shape]
\tikzstyle{block1} = [rectangle, draw, thick,fill=blue!10, text width=6.5em, text centered, rounded corners, minimum height=4em]

\tikzstyle{block2} = [rectangle, draw, thick,fill=blue!10, text width=10em, text centered, rounded corners, minimum height=4em]

\tikzstyle{block} = [rectangle, draw, thick, fill=blue!10, text width=3em, text centered, rounded corners, minimum height=3em]

    \node [block1] (init) {Fix the distribution\\ $\mu$};
    \node [block1, right of=init, node distance=4cm] (identify) {Calculate the $\alpha$-quantile value\\ $q^{\alpha}$};
    \node [block1, right of=identify, node distance=4cm] (evaluate) {Solve the resulting stoch. control prob.\\ $x^\ast, u^\ast$};
    \node [block1, below of=evaluate, node distance=2cm] (decide) {Calculate the distribution of optimal state\\ $\mc{L}(x^\ast)$};
    \node [block1, left of=decide, node distance=4cm] (stop) {Calculate the $\alpha$-quantile value\\
    $Q(\alpha,\mc{L}(x^\ast))$};
    \node [block2, left of=stop, node distance=4cm] (loop) {$|q^{\alpha}- Q(\alpha, \mc{L}(x^\ast))| \overset{?}{<} \delta$};
    \node [block1, below of=loop, node distance=1.8cm] (realstop) {Stop \\ $\bar{q}^\alpha=q^\alpha$};
    
    \path [draw, -latex'] (init) -- (identify);
    \path [draw, -latex'] (identify) -- (evaluate);
    \path [draw, -latex'] (evaluate) -- (decide);
    \path [draw, -latex'] (decide) -- (stop);
    \path [draw, -latex'] (stop) -- (loop);
    \path [draw, -latex'] (loop) -- node {yes} (realstop);
    \path [draw, -latex'] (loop) -- node {no} (init)node[midway, right] {set $\mu=\mc{L}(x^\ast)$};
\end{tikzpicture}}
\caption{Numerical scheme for solving the limiting threshold-based problem as described by \eqref{gmfg1-fixed}-\eqref{gmfg2-fixed} and \eqref{VI-cost-nonlinear}.}
\label{fig:numerical-scheme}
\end{figure}
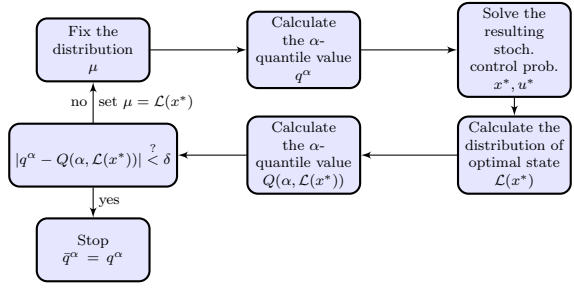

\section{Application to Early-Stage Venture Investments}\label{sec:application}
Early-stage venture investments refer to the funding provided to start-up companies and entrepreneurial ventures at the beginning of their development cycle. This type of investment is crucial for new companies that are looking to grow but do not yet have enough revenue or cash flow to support their operations or expand at the desired pace. Venture capital firms manage pooled funds from many investors to invest in start-ups and small businesses. They typically engage during the early stages of development and play a pivotal role in the growth phases of start-up  companies.
Early-stage investments are considered high-risk because many start-ups fail for various reasons, such as poor market fit and competition. However, the potential for high returns on investment in the event of success is substantial. 
In this paper, we propose and examine a scenario in which the venture capital firm holds a competition among several start-up companies in order to select a top proportion of them at the end of the competition. Specifically, we study behavior of the start-up companies during this competition.

More specifically, we consider a competition between $N$ homogeneous start-up companies over a finite time horizon $\mc{T}=[0,T]$. This competition is set by a venture capital firm that supports the start-ups in this initial phase and aims to select a certain proportion $(1-\alpha)$, $\alpha \in (0,1)$, of top ranking start-ups, based on their market value  (for instance, to receive subsequent funding and support). The proportion  $(1-\alpha)$ is predetermined and announced by the venture capitalist before the competition begins. Subsequently, the market value of the start-up companies is evaluated at the terminal time $T$. 
Those with market values equal to or greater than the sample $\alpha$-quantile of  $N$ companies will be selected by the venture capital firm.

In this application context over the time horizon $\mfT$, $x_t^i$ represents the market value of start-up $i$ at time $t,\, t\in \mfT$. Moreover, $\gamma_t$ represents a deterministic financial, or more generally any form of, support provided by the venture capital firm to each start-up at time $t$. This term captures the practical dynamics of venture capital investment, where funding is typically supplied in stages over time and may include not only monetary contributions but also strategic guidance, managerial expertise, and network access. It is useful to reiterate that  $u^i_t$ represents the effort exerted by start-up $i$ at time $t$ and that the coefficient $b$ is interpreted as the efficiency of this effort. 
The uncertainty specific to the environment of start-up $i$ is modeled by the idiosyncratic Wiener process $w^i$ with volatility $\sigma$. 
Finally, $q^{\alpha, [N]}_T$ denotes the sample $\alpha$-quantile value of the terminal market values of the $N$ start-ups $\{x^i_T\}_{i=1}^N$, as defined in \eqref{general-sample-quantile}, and acts as the threshold for success at the terminal time $T$. 

In this section, we present the results of numerical experiments that provide deeper insights into the venture investment selection criterion under study. \Cref{sec_resuls_threshold} and \Cref{sec_results_target} present the results for the threshold-based formulation (described in \Cref{sec:threshold-general}) and the target-based formulation (described in \Cref{sec:target-general}), respectively, using the model parameters reported in \Cref{model-parameters}. Subsequently, \Cref{sec:sensitivity} reports the results of the sensitivity analysis, in which the dynamical and cost functional parameters are varied from their nominal values given in \Cref{model-parameters}. Finally, \Cref{sec:conclusion} discusses that both formulations yield qualitatively similar results across all considered scenarios.
\begin{table}[h]
\centering
\begin{minipage}[c]{0.9\textwidth}
\begin{center}
{\small 
 \begin{tabular}{ccccccc}
\toprule\toprule
 $T$ & $\{ \gamma_t \}_{t\in\mfT}$ & $\mu_0$ & $b$ & $\sigma$ & $r$ &  $\lambda$  \\
 \midrule
 $1$ & $0$\footnote{In the current setup, since this term is deterministic and identical across all startups, incorporating it into the simulations would merely shift all trajectories by the same amount over time. Consequently, it would not inherently affect the startups' behavior. For this reason, it is set to zero in the numerical experiments. } & $\mc{N}(0,0.25)$ & $0.5$ &  $0.5$ & $0.1$ & 1\\
\bottomrule\bottomrule
\end{tabular}}
\caption{Model parameters.}
\label{model-parameters}
\end{center}
\end{minipage}
\end{table}

\subsection{Results and Interpretations of Numerical Experiments for the Threshold-Based Formulation}\label{sec_resuls_threshold}
We start with the limiting case where the number of start-ups, $N$, tends to infinity. In this case  the $\alpha$-quantile value, i.e. the threshold for success, becomes deterministic since the distribution of the state is deterministic. The equilibrium $\alpha$-quantile value, $\bar{q}^\alpha$, is the fixed point of the mean-field consistency equation given by \eqref{MF-consistency-general}. We employ the numerical scheme depicted in \Cref{fig:numerical-scheme}, an adaption of the method proposed in \cite{Firoozi2022MAFI}, to completely solve the limiting problem described in \Cref{sec:VC-nonlinear-sol}. 

\Cref{limiting-nonlinear-fixed-alpha} shows the results for the case where only the top $5\%$ of the start-ups at the terminal time will be selected for further funding. This is equivalent to setting the success threshold to the $0.95$-quantile value. Recall that, in the threshold-based formulation, a representative start-up aims to  attain  at least the terminal cutoff threshold. As a result, we observe that the distribution of its market value evolves over time towards the terminal $\alpha$-quantile value, as can be seen in Panel (a) of \Cref{limiting-nonlinear-fixed-alpha}. This fact is further supported by the temporal evolution of the $\alpha$-quantile value depicted in Panel (b) of \Cref{limiting-nonlinear-fixed-alpha}, which illustrates an increasing pattern over time. Another interesting aspect observed in Panel (a) of \Cref{limiting-nonlinear-fixed-alpha} is the increasing concentration of the distribution around the mean value over time, resulting in a decrease in the variance of the distribution. These observations can be attributed to the efforts of a representative start-up company to reach the required threshold by the terminal time, which lead to an increase in its $\alpha$-quantile and a reduction in the dispersion of its market value around the mean.

The optimal strategy for a representative start-up is illustrated in Panel (c) of \Cref{limiting-nonlinear-fixed-alpha}. We observe that, at a fixed point in time, the farther the start-up's market value is from the terminal cutoff threshold, the more effort it exerts. 
Once its market value is ensured to have met this threshold, the representative start-up  ceases to exert additional effort, resulting in the strategy dropping to zero. Furthermore, for a given market value that is below the terminal cutoff threshold, the effort exerted by the start-up increases over time. In other word, as long as its market value has not reached the terminal cutoff threshold, the start-up's effort intensifies with time.     
 \begin{figure}[h] 
 \subfigure[]{
\begin{minipage}[t]{.32 \textwidth} 
    \includegraphics[width=\textwidth]{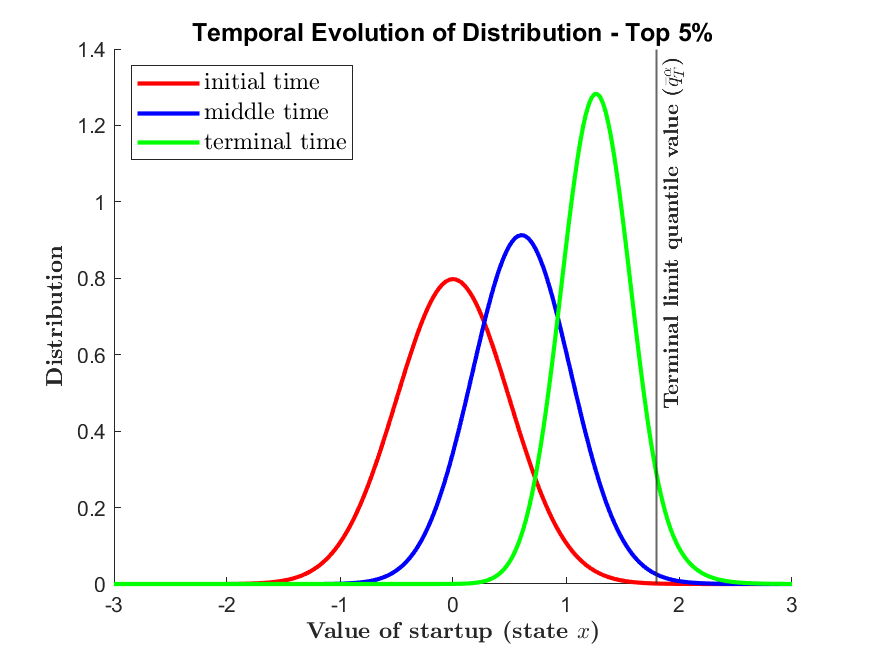}
    \end{minipage}}
\hfill 
\subfigure[]{
\begin{minipage}[t]
{.32\textwidth} 
\includegraphics[width=\textwidth]{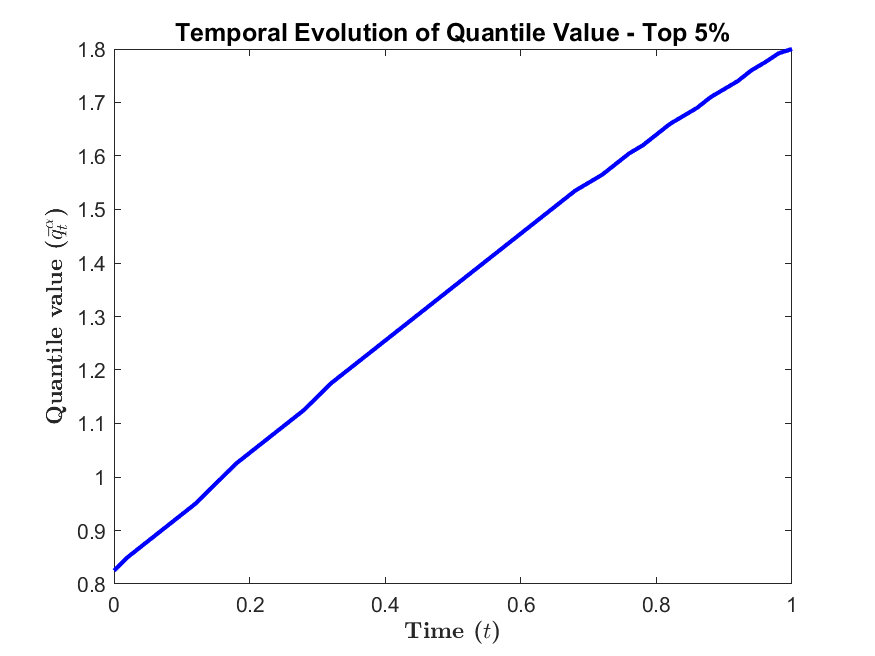}
\end{minipage}}
\hfill 
\subfigure[]{
\begin{minipage}[t]{.32\textwidth} 
 \includegraphics[width=\textwidth]{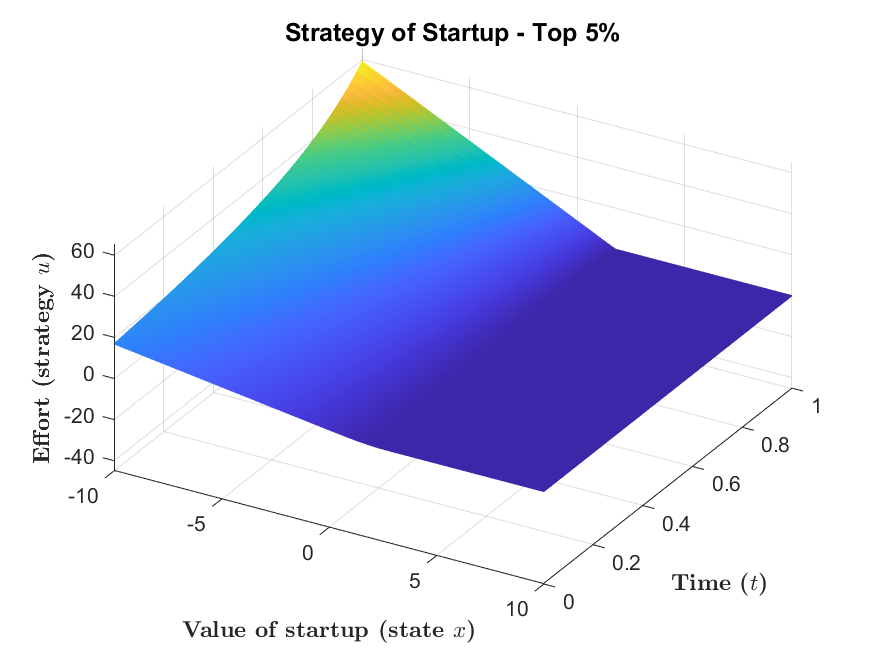}
\end{minipage}}
\caption{Limiting threshold-based model: Results for quantile level $\alpha=0.95$ and parameter values reported in \Cref{model-parameters}. 
}\label{limiting-nonlinear-fixed-alpha}
\end{figure}

Thus far, we have presented numerical results for a specific quantile level, that is $\alpha=0.95$. We now investigate the impact of the quantile level announced by the venture capital firm on behavior of the competing start-ups. \Cref{fig:diff-quantile-levels-nonlinear} illustrates the evolution of $\alpha$-quantile value as a function of the quantile level $\alpha$ and time $t$. 
We observe that the game among participating start-ups becomes more competitive as the quantile level $\alpha$ increases. Specifically, we observe more increase in the quantile value $\bar{q}^\alpha$ through time for larger value of $\alpha$, that is the companies improve their market values further. For relatively low quantile levels (such as $\alpha<0.2$), we observe that the quantile value $\bar{q}^\alpha$ does not increase much, and can even decrease over time. This is while when the quantile level tends to one, the increase in the  quantile value $\bar{q}^\alpha$ over time is the highest. To summarize, the threshold for success at the terminal time, $T=1$, increases with the quantile level $\alpha$. This observation indicates that, for the venture capital firm, setting the bar at a higher level results in a more desirable outcome, that is, the selected start-ups have higher market values.
\begin{figure}
    \centering
    \includegraphics[width=0.4\linewidth]{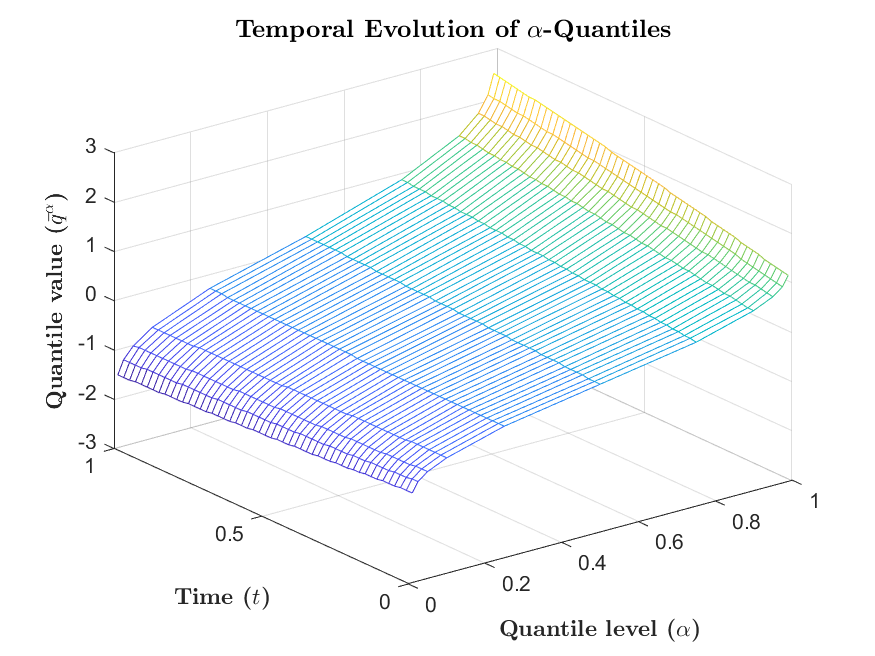}
   \caption{Limiting threshold-based model: Impact of quantile level $\alpha$ on the evolution of equilibrium quantile value $\bar{q}^\alpha$ over time for parameter values reported in \Cref{model-parameters}.} \label{fig:diff-quantile-levels-nonlinear}
\end{figure}

We now apply the optimal strategies  obtained from the limiting case to a competition involving a  finite number of start-up companies, $N=1000$, participating in the competition. To facilitate the comparison of the results with those of the limiting case, we use the same quantile level as before, that is $\alpha=0.95$. 
Panel (a) of \Cref{fig:emp-limit-results-nonlinear} presents the temporal evolution of the limit and empirical distributions
of the start-ups' market values, illustrated by solid lines and histograms, respectively. We observe that the limit distribution approximates the empirical distribution of $1000$ startups over time very well. 
Panel (b) of \Cref{fig:emp-limit-results-nonlinear} depicts the result of one specific simulation, showing both the limit and the empirical quantile values at the terminal time (respectively in pink and red color) 
and the trajectories of $1000$ individual start-ups over time. The start-ups whose trajectories are
depicted in green attain the cutoff threshold at the terminal time, $T=1$, and 
are hence selected by the venture capital firm, while those whose trajectories are depicted in blue are rejected. This specific instance illustrates that that the limit quantile provides a good approximation for the empirical quantile and shows how the competition to reach the terminal threshold leads all the start-up companies to exert efforts and increase their market value. This is due to the cost of not attaining the cutoff threshold, which is increasing with the amount of the shortfall.

Finally, to give a broader insight about how closely the limit quantile, a deterministic quantity, approximates the empirical quantile, a stochastic quantity, Panel (c) of \Cref{fig:emp-limit-results-nonlinear} illustrates a histogram of empirical $\alpha$-quantile values, obtained from $1000$ simulations of the competition involving 1000 start-up companies, along with the limit quantile, depicted by a vertical line. We observe that the empirical quantiles closely match the limit quantile. 

 \begin{figure}[h] 
 \subfigure[]{
\begin{minipage}[t]{.32 \textwidth} 
    \includegraphics[width=\textwidth]{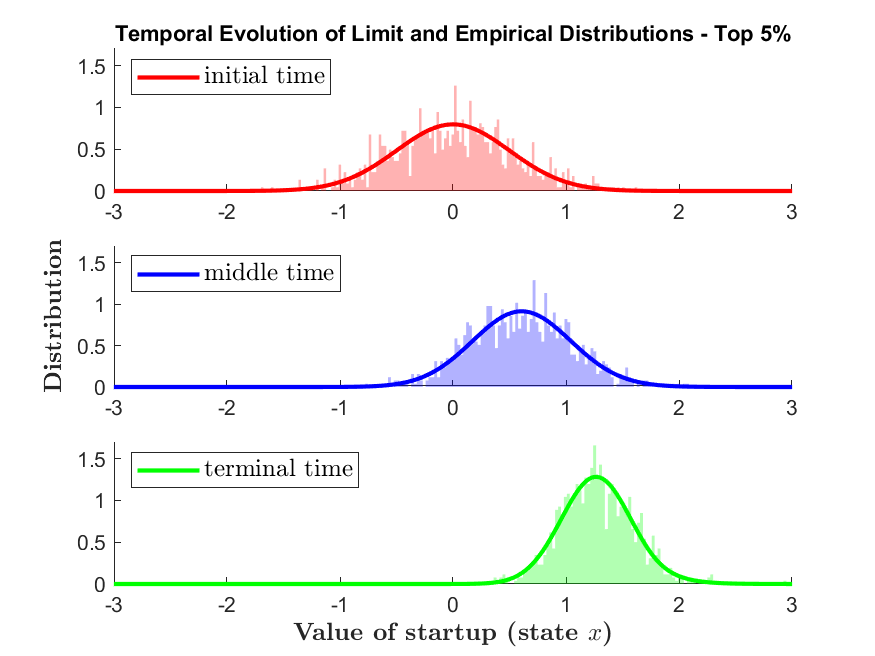}
    \end{minipage}}
\hfill 
\subfigure[]{
\begin{minipage}[t]
{.32\textwidth} 
\includegraphics[width=\textwidth]{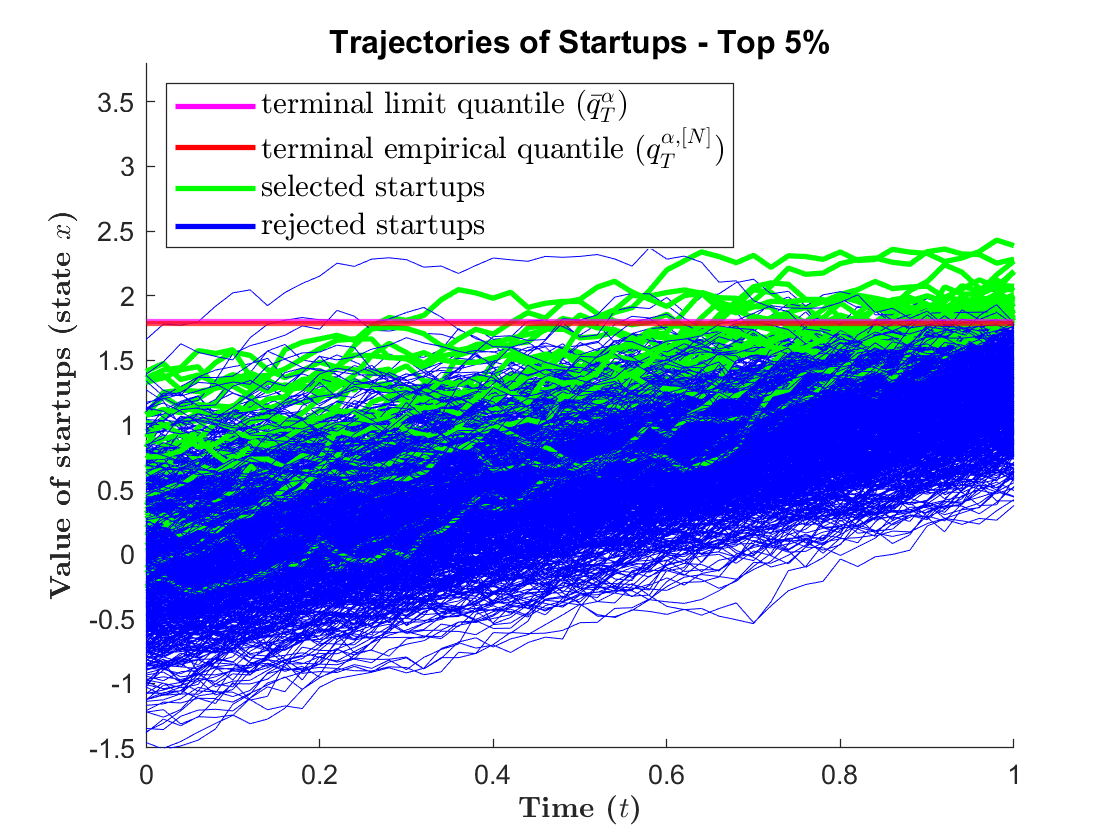}
\end{minipage}}
\hfill 
\subfigure[]{
\begin{minipage}[t]{.32\textwidth} 
 \includegraphics[width=\textwidth]{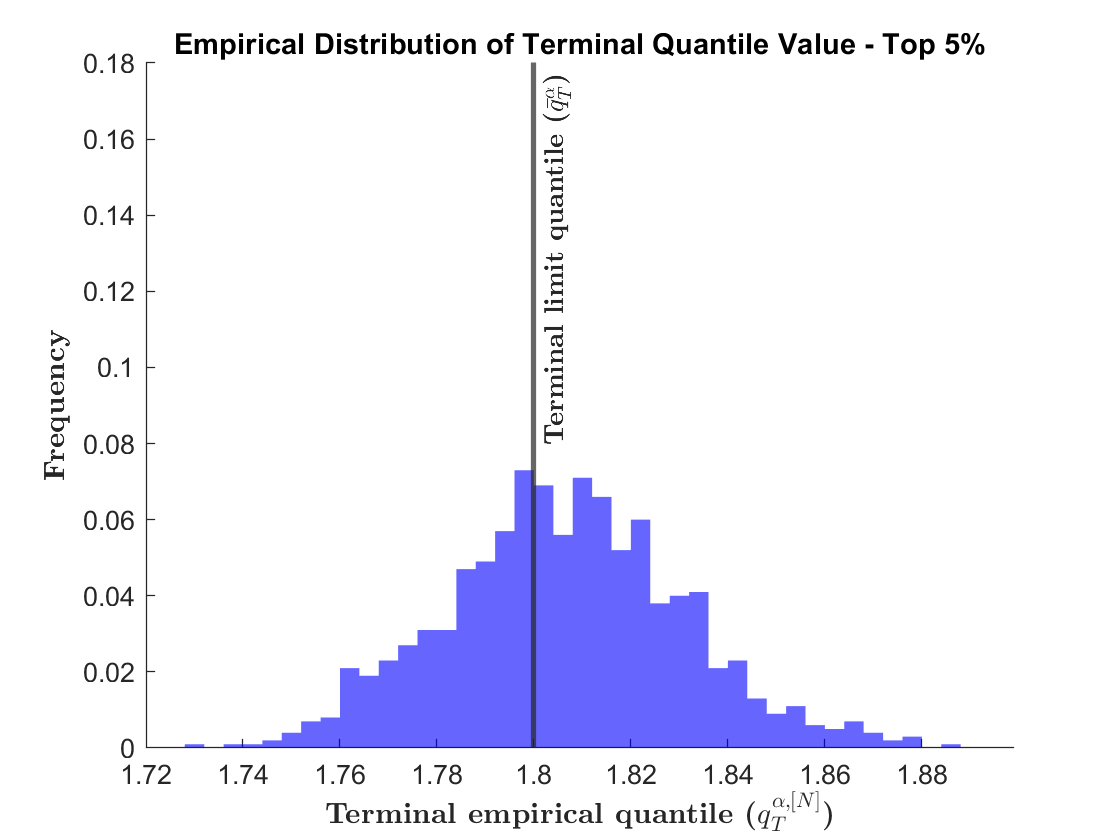}
\end{minipage}}
\caption{Finite-population threshold-based model involving $1000$ start-ups: Results for quantile level $\alpha=0.95$ and parameter values reported in \Cref{model-parameters}.
}\label{fig:emp-limit-results-nonlinear}
\end{figure}


\subsection{Results and Interpretations of Numerical Experiments for the Target-Based Formulation}\label{sec_results_target}
We now consider the target-based formulation, where a representative
start-up company aims to have its terminal market value at exactly a target value, which is defined by the sample $\alpha$-quantile of the participating start-ups. We perform a similar analysis as in Section \ref{sec_resuls_threshold}, starting with the limiting case where the number of start-ups, $N$, tends to infinity.

\Cref{limiting-quadratic-fixed-alpha} shows the results corresponding to a target determined by the $0.95$-quantile of the market-value distribution at the terminal time $T=1$. This figure illustrates  a behavior very similar to the 
one observed for the threshold-based formulation  in \Cref{limiting-nonlinear-fixed-alpha}. 
Specifically, the evolution of the distribution of the representative agent's market value over time, represented in Panel (a) of \Cref{limiting-quadratic-fixed-alpha}, is similar to that illustrated in Panel (a) of \Cref{limiting-nonlinear-fixed-alpha}.
As a result, the temporal evolution of the $\alpha$-quantile value depicted in Panel (b) of \Cref{limiting-quadratic-fixed-alpha} is similar to that illustrated in Panel (b) of  \Cref{limiting-nonlinear-fixed-alpha}. We recall that the $\alpha$-quantile value is analytical for the target-based formulation and is the solution to the ODE given by \eqref{fbode5}. 

The optimal strategy of the representative start-up is depicted in Panel (c) of \Cref{limiting-quadratic-fixed-alpha}, which exhibits a similar trend for positive values of the strategy as seen in Panel (c) of \Cref{limiting-nonlinear-fixed-alpha}. However, in contrast to the threshold-based scenario, where the strategy's value drops to zero for market values exceeding the threshold, the target-based scenario maintains a decreasing strategy as market value increases, aiming to keep it at the target level.

\begin{figure}[h] 
 \subfigure[]{
\begin{minipage}[t]{.32 \textwidth} 
    \includegraphics[width=\textwidth]{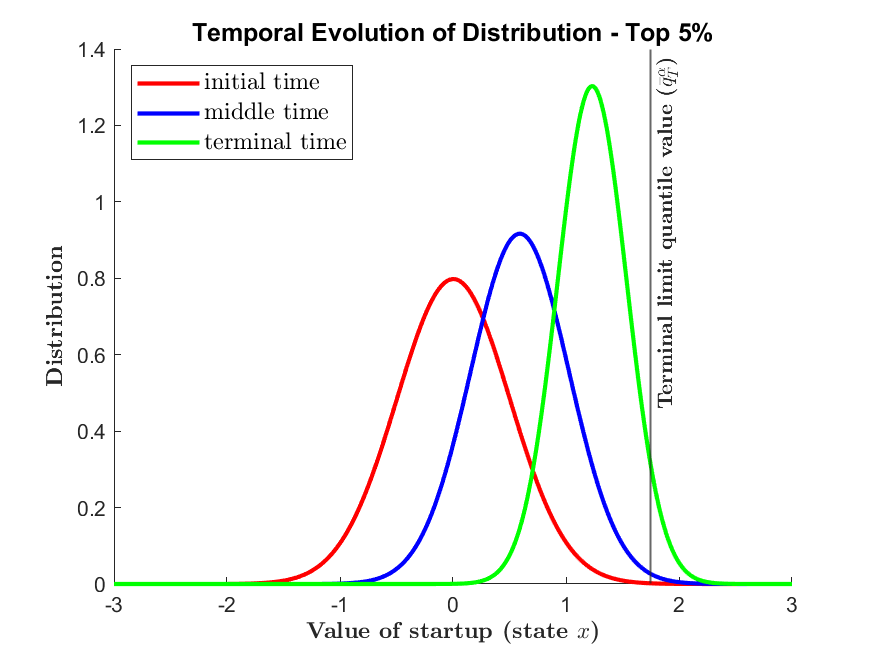}
    \end{minipage}}
\hfill 
\subfigure[]{
\begin{minipage}[t]
{.32\textwidth} 
\includegraphics[width=\textwidth]{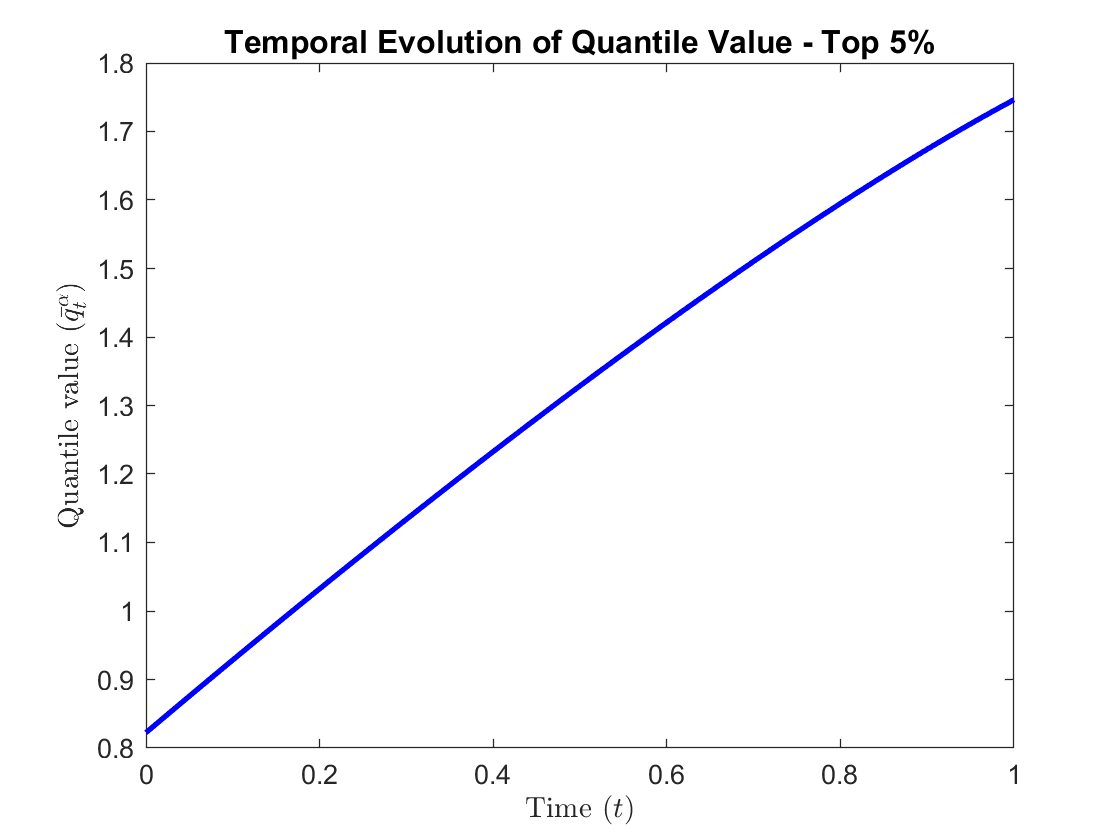}
\end{minipage}}
\hfill 
\subfigure[]{
\begin{minipage}[t]{.32\textwidth} 
 \includegraphics[width=\textwidth]{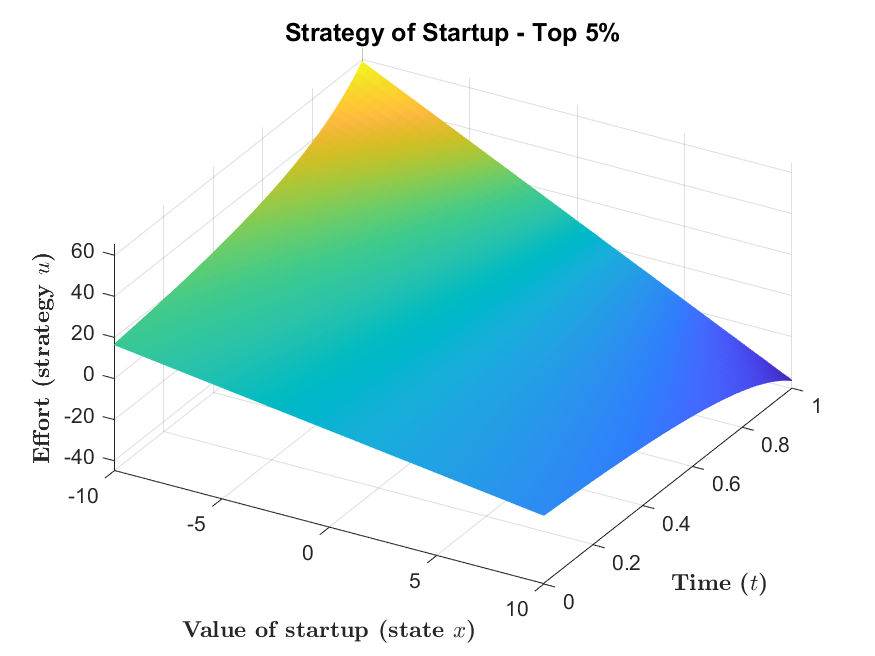}
\end{minipage}}
\caption{Limiting target-based model: Results for quantile level $\alpha=0.95$ and parameter values reported in \Cref{model-parameters}.
}\label{limiting-quadratic-fixed-alpha}
\end{figure}


\Cref{fig:diff-quantile-levels-quadratic} illustrates the impact of the quantile level $\alpha$ on the temporal evolution of the $\alpha$-quantile value. As for the threshold-based formulation (see \Cref{fig:diff-quantile-levels-nonlinear}), an increase in the quantile level $\alpha$ makes the game among start-ups more competitive, leading to a higher terminal target market value.  

\begin{figure}[ht]
    \centering
    \includegraphics[width=0.4\linewidth]{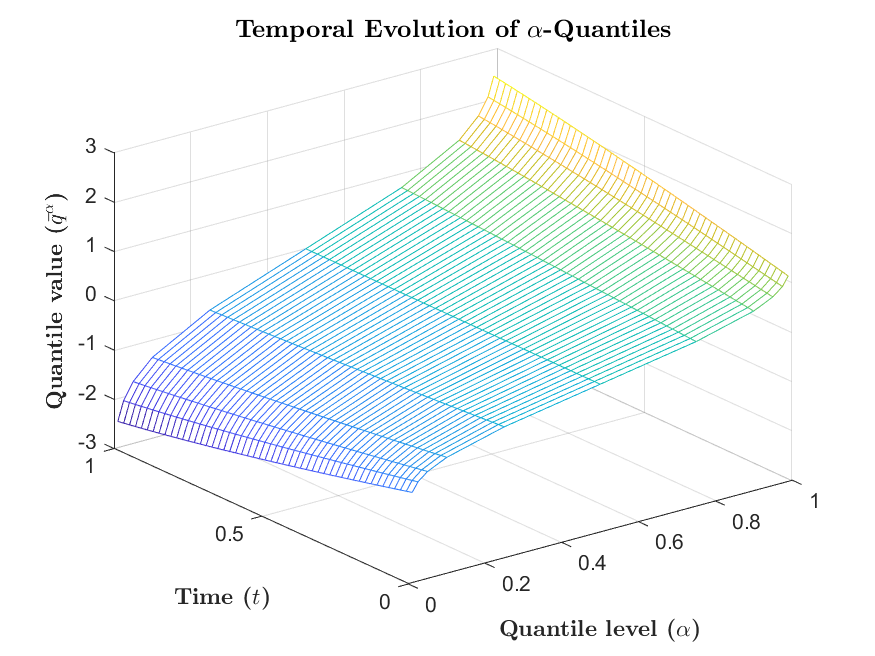}
   \caption{Limiting target-based model: The impact of quantile level $\alpha$ on the evolution of equilibrium quantile value $\bar{q}^\alpha$ over time for parameter values reported in \Cref{model-parameters}.} \label{fig:diff-quantile-levels-quadratic}
\end{figure}

Finally, \Cref{fig:emp-limit-results-quadratic} presents the results corresponding to the competition game among a finite number of start-ups, $N=1000$, with the quantile level set at $\alpha=0.95$. Similarly to the results of the threshold-based formulation, depicted in \Cref{fig:emp-limit-results-nonlinear}, the limit distribution closely approximates the empirical distribution of $1000$ start-ups over time, as illustrated in Panel (a) of \Cref{fig:emp-limit-results-quadratic}. As a consequence, as shown in Panel (b), 
which presents the results of one specific simulation, the limit and the empirical quantile values are almost identical. Panel (b) also illustrates the fact that the terminal market values of the successful start-ups, depicted in green, are slightly more concentrated for the target-based formulation, compared to those of the threshold-based formulation.  
This outcome is expected and is due to the quadratic terminal cost in the target-based formulation, which leads the agents to aim at attaining the target exactly. As a result, the empirical quantile values are slightly more concentrated around the limit quantile in the target-based formulation, compared to those in the threshold-based formulation, as illustrated in Panel (c) of Figures \ref{fig:emp-limit-results-nonlinear} and \ref{fig:emp-limit-results-quadratic}. We expect that the concentration of the sample quantiles around the limit quantile increases with the number $N$ of participating start-ups, which is supported by the Nash error reported in \eqref{Nash-error}. 

 \begin{figure}[h] 
    \subfigure[]{
    \begin{minipage}[t]{.32 \textwidth} 
    \includegraphics[width=\textwidth]{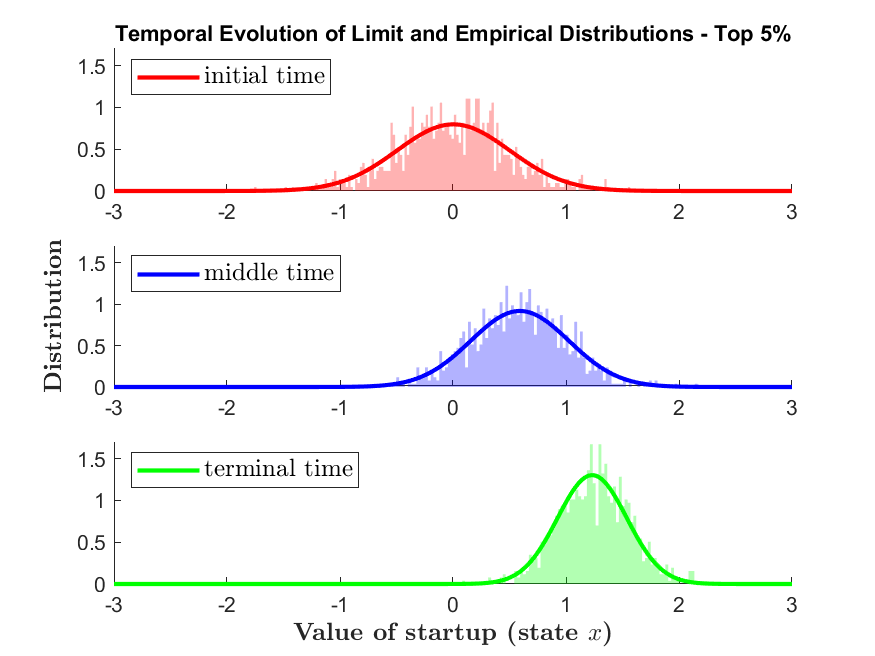}
    \end{minipage}}
\hfill 
    \subfigure[]{
    \begin{minipage}[t]
    {.32\textwidth} 
    \includegraphics[width=\textwidth]{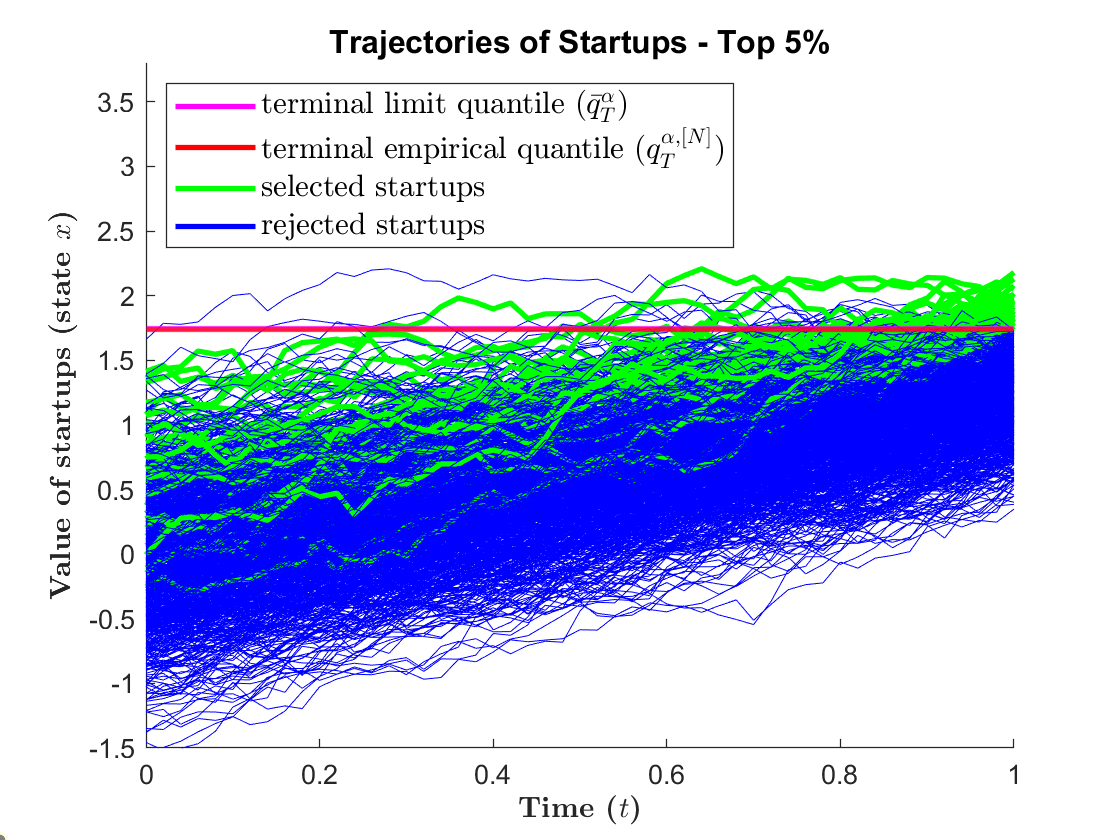}
    \end{minipage}}
\hfill 
\subfigure[]{
\begin{minipage}[t]{.32\textwidth} 
 \includegraphics[width=\textwidth]{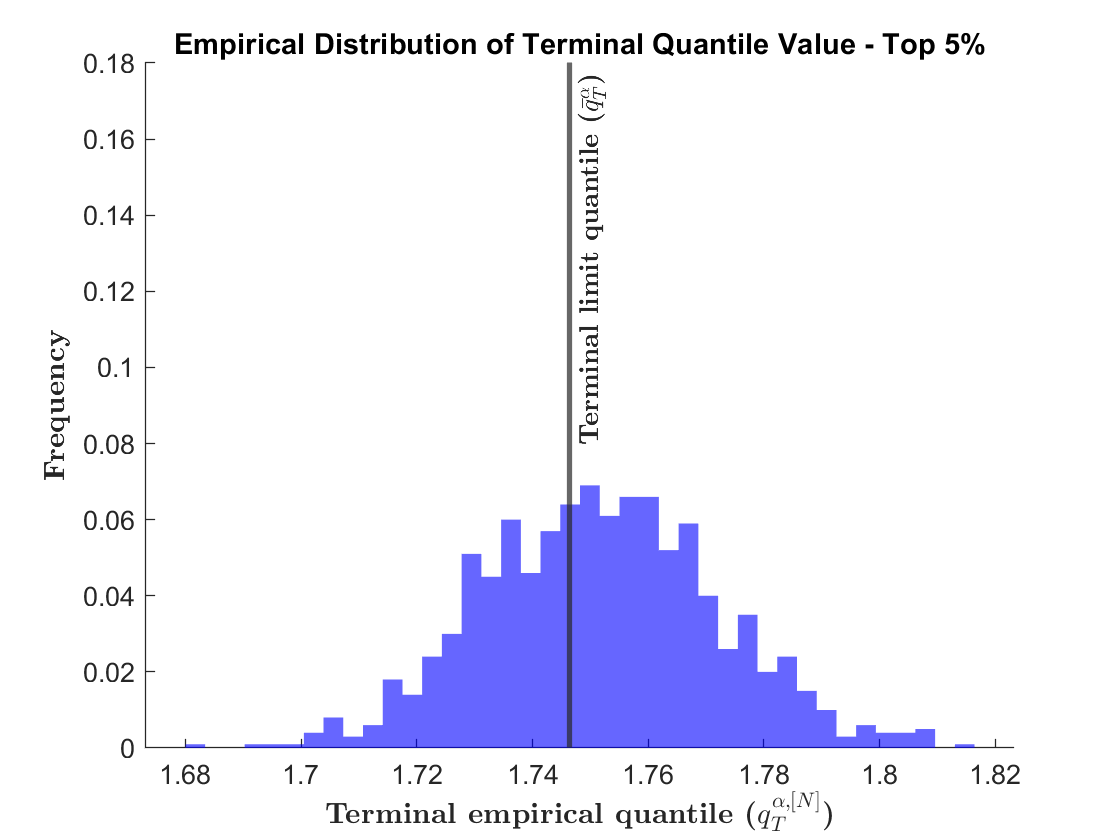}
\end{minipage}}
\caption{Finite-population target-based model involving $1000$ start-ups: Results for quantile level $\alpha=0.95$ and parameter values reported in \Cref{model-parameters}.}
\label{fig:emp-limit-results-quadratic}
\end{figure}


\subsection{Sensitivity Analysis with Respect to Model Parameters}\label{sec:sensitivity}
To evaluate the robustness and reliability of the results, we examine how variations in model parameters affect the equilibrium $\alpha$-quantile value, which is defined as the solution to the quantilized mean-field consistency condition, for both the threshold-based and target-based formulations. Specifically, we vary the dynamical parameters, namely, the efficiency strength $b$ and the volatility $\sigma$, by $\pm 20\%$ and $\pm 40\%$, and present the corresponding results in \Cref{fig:sensitivity-dynamics}. In addition, we vary the cost functional parameters, i.e., the running and terminal cost weights $r$ and $\lambda$, respectively, by the same percentages, and illustrate the results in \Cref{fig:sensitivity-cost}. As shown, in all cases, the equilibrium $\alpha$-quantile for both formulations follows a similar pattern, indicating that the solution obtained from the target-based formulation provides a reasonable approximation to that of the threshold-based formulation across all tested scenarios. Additionally, we observe that the $\alpha$-quantile value for the threshold-based formulation is consistently higher than that of the target-based formulation at equilibrium. This can be explained by the fact that, in the latter, there is a penalty for exceeding the threshold in addition to a cost associated with exerting greater effort. In contrast, in the former, although there remains a cost for increasing effort, there is no penalty for surpassing the threshold. Consequently, the threshold-based formulation is more relaxed, resulting in higher $\alpha$-quantile values at equilibrium.
\begin{figure}[h]
    \centering
    \subfigure[]{
    \begin{minipage}[t]{.38 \textwidth}         \includegraphics[width=\linewidth]{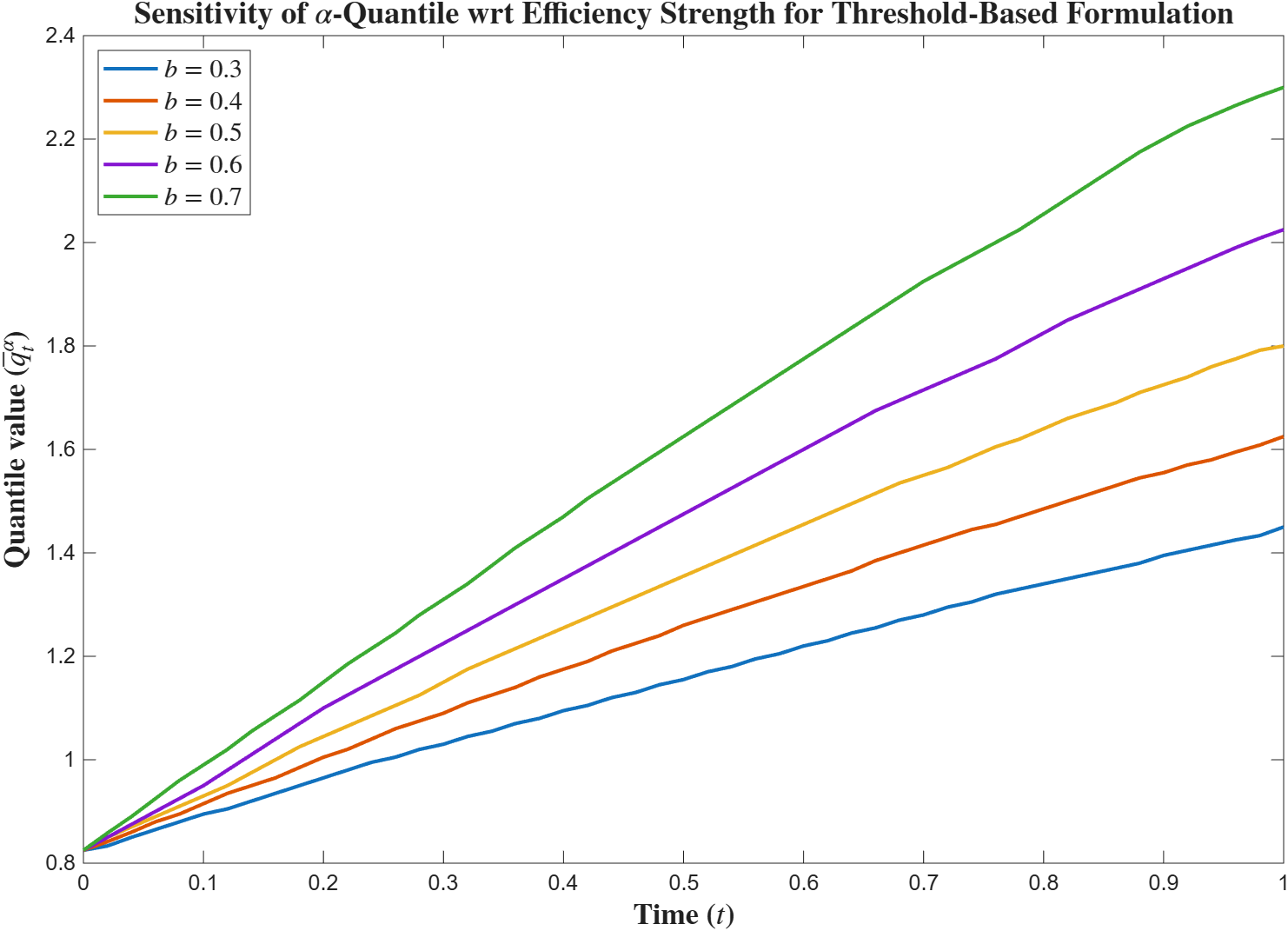}
    \end{minipage}}
    \hspace{0.05\textwidth} 
    \subfigure[]{
    \begin{minipage}[t]{.38 \textwidth} 
        \includegraphics[width=\linewidth]{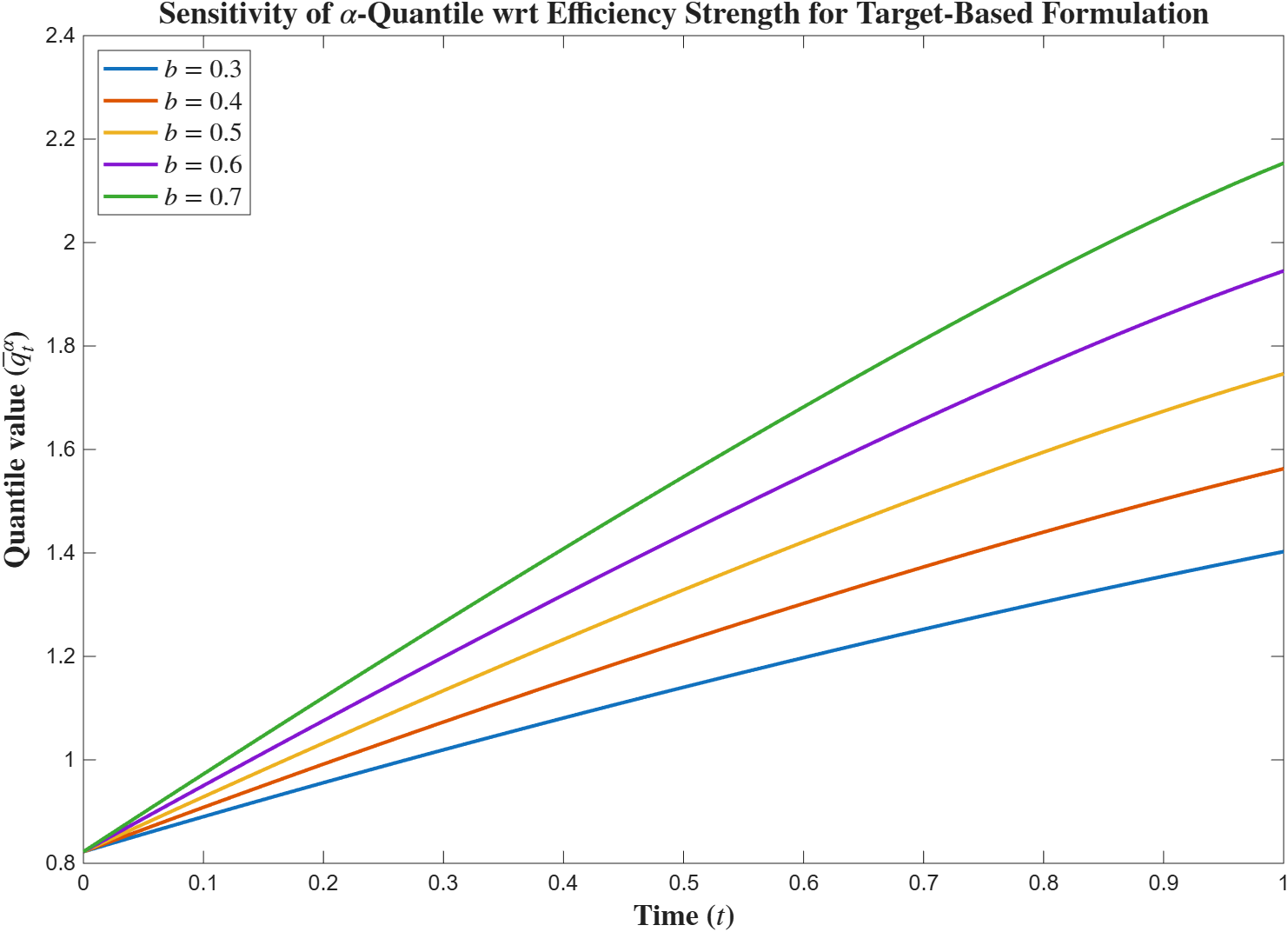}
    \end{minipage}}
    \hfill
    \subfigure[]{
    \begin{minipage}[t]{.38 \textwidth}
        \includegraphics[width=\linewidth]{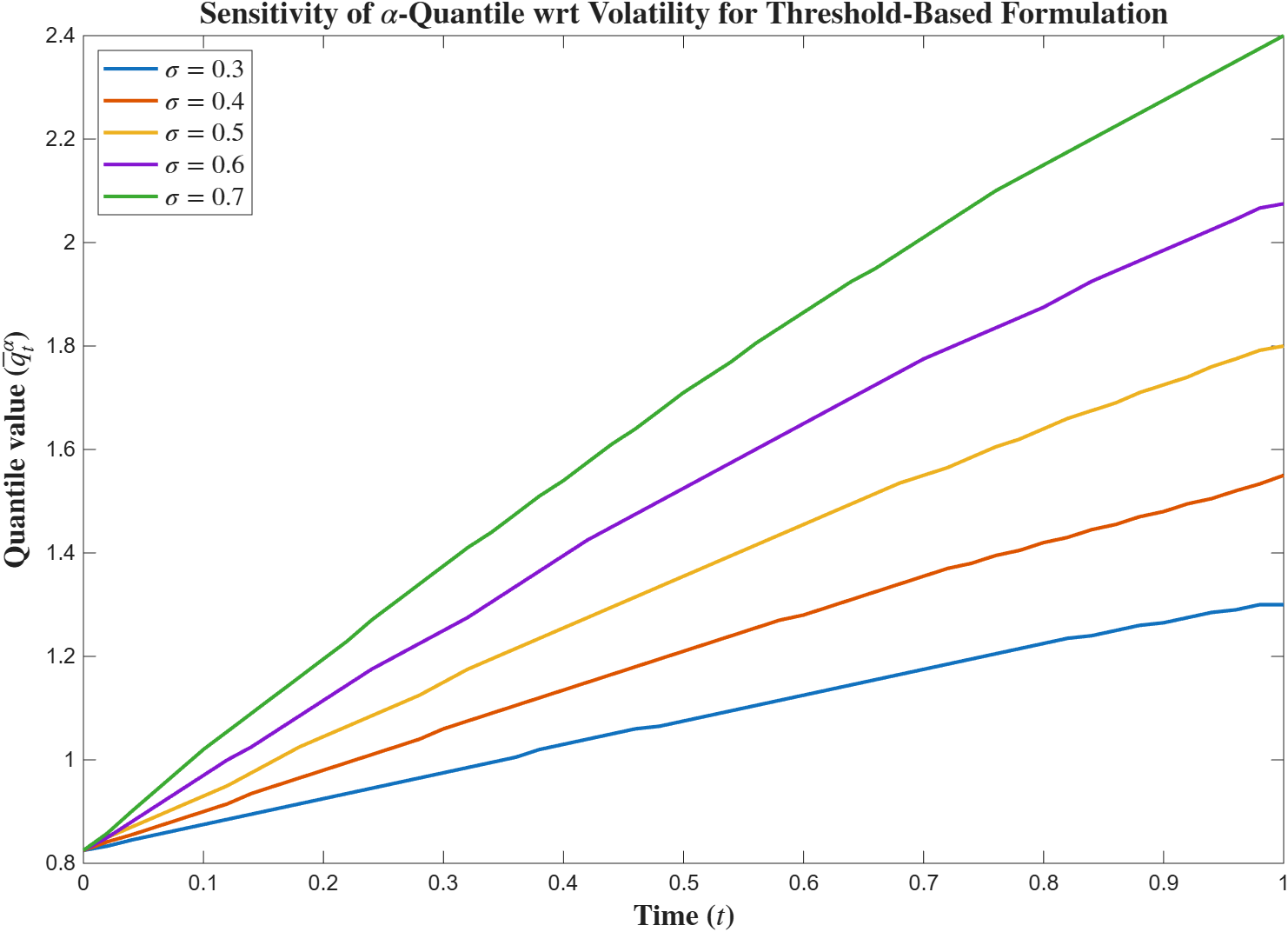}
    \end{minipage}}
    \hspace{0.05\textwidth} 
    \subfigure[]{
    \begin{minipage}[t]{.38 \textwidth}
        \includegraphics[width=\linewidth]{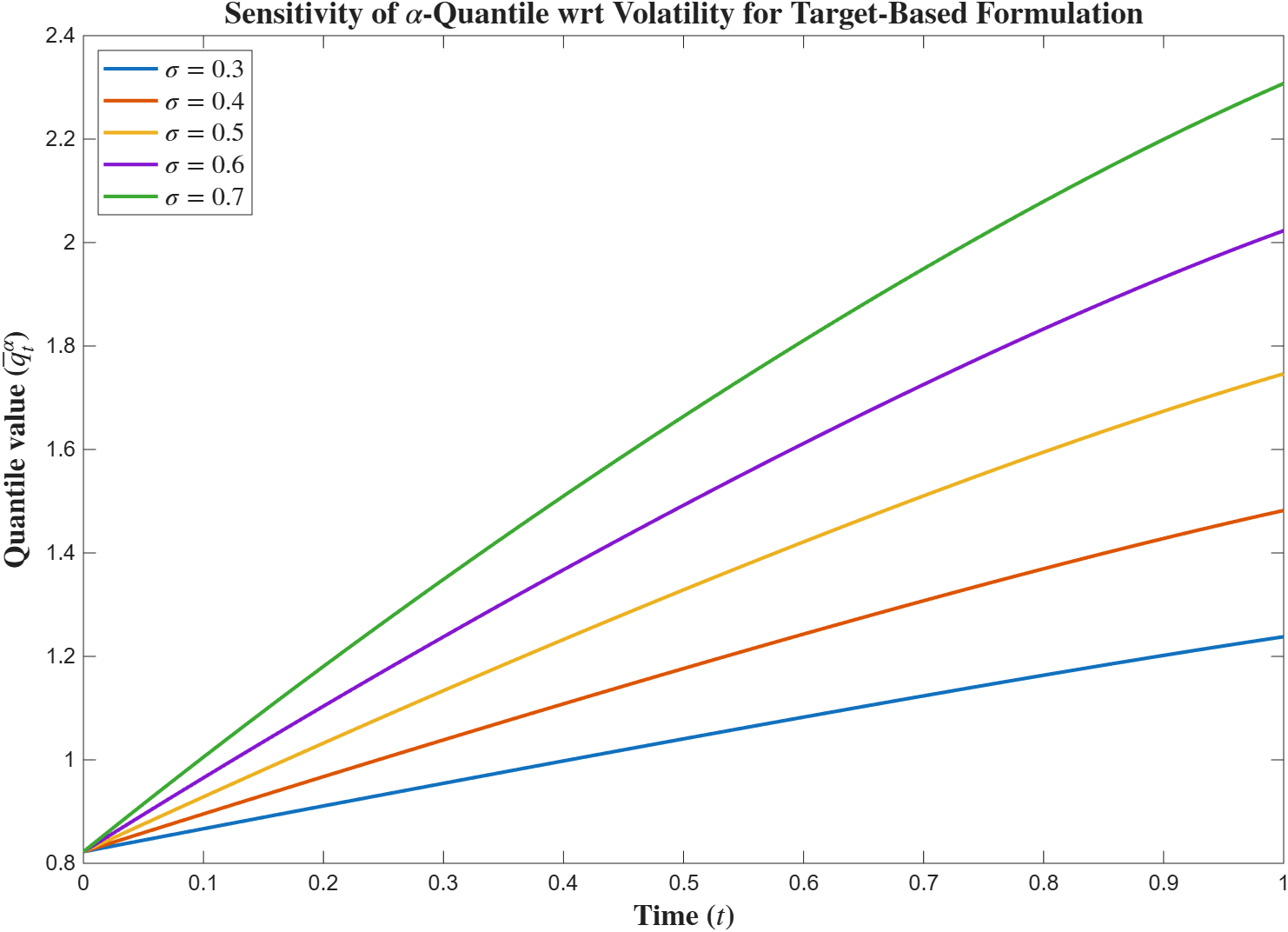}
    \end{minipage}}
    \caption{Sensitivity analysis with respect to the dynamical parameters, efficiency strength $b$ and volatility $\sigma$, varied by $\pm 20\%$ and $\pm 40\%$ from their nominal values $b = 0.5$ and $\sigma = 0.5$.}
    \label{fig:sensitivity-dynamics}
\end{figure}
\begin{figure}[h]
    \centering
    \subfigure[]{
    \begin{minipage}[t]{.38 \textwidth} 
        \includegraphics[width=\linewidth]{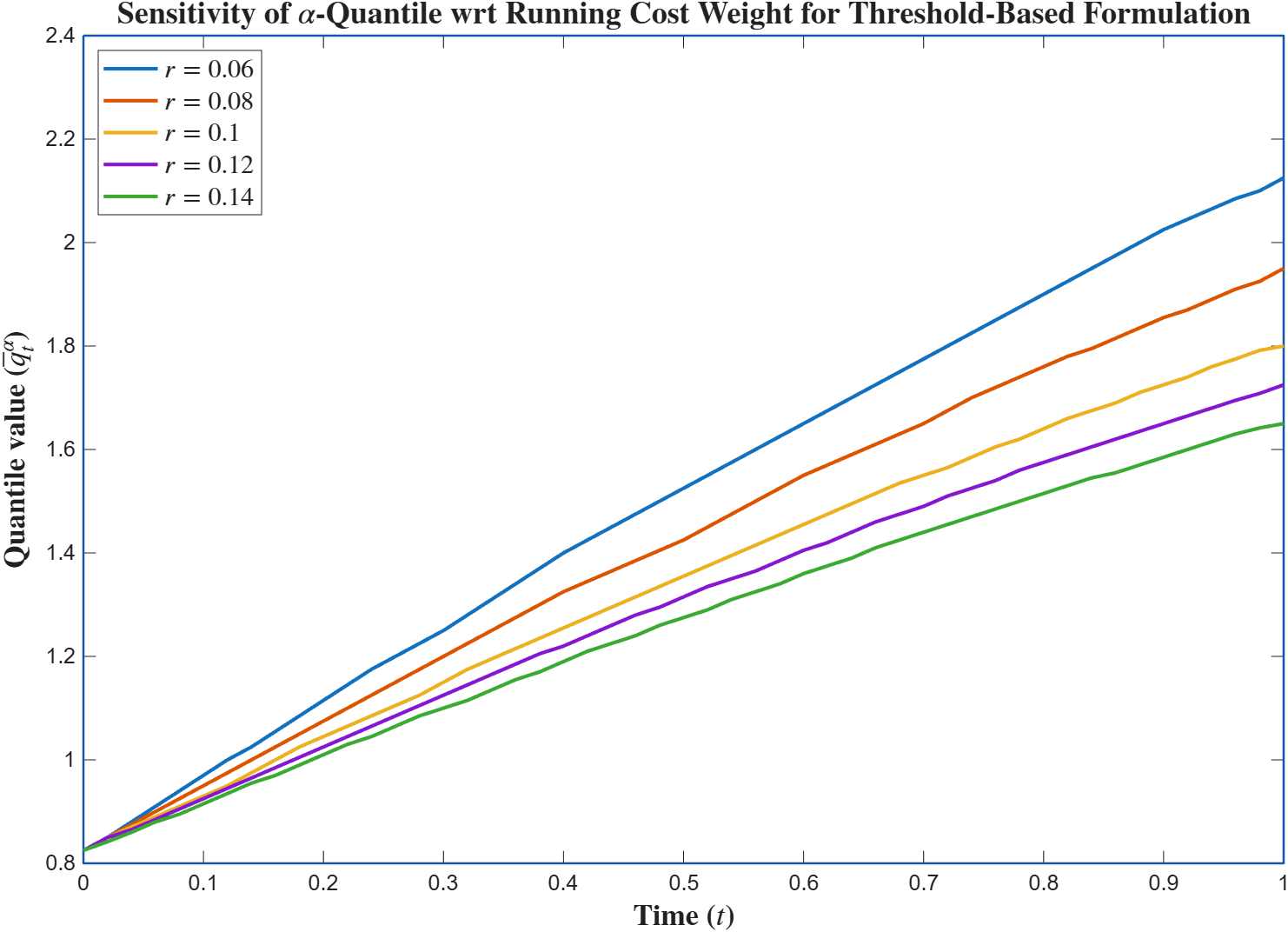}
    \end{minipage}}
        \hspace{0.05\textwidth} 
    \subfigure[]{
    \begin{minipage}[t]{.38 \textwidth}         \includegraphics[width=\linewidth]{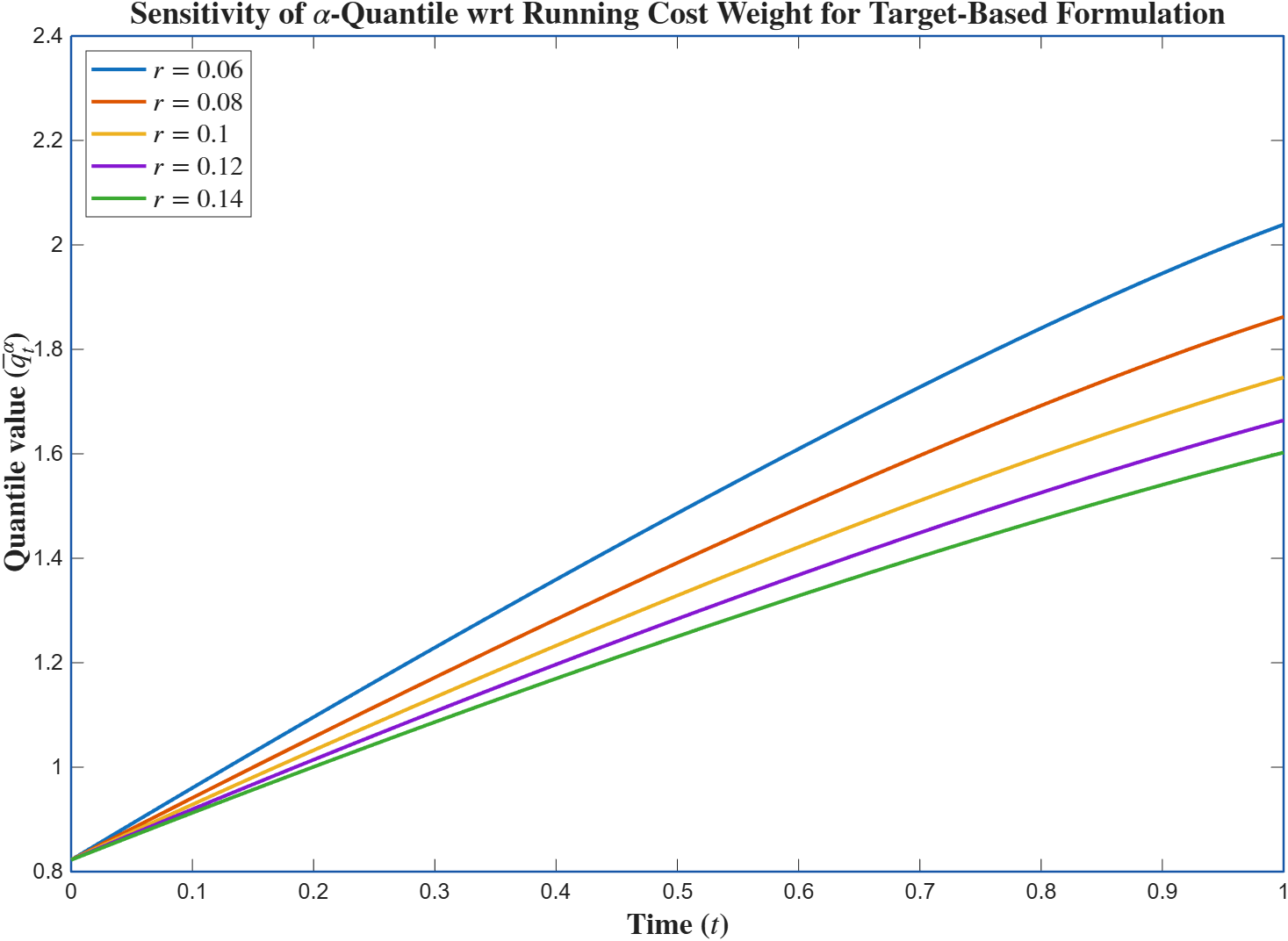}
    \end{minipage}}
\hfill
    \subfigure[]{
    \begin{minipage}[t]{.38 \textwidth}
        \includegraphics[width=\linewidth]{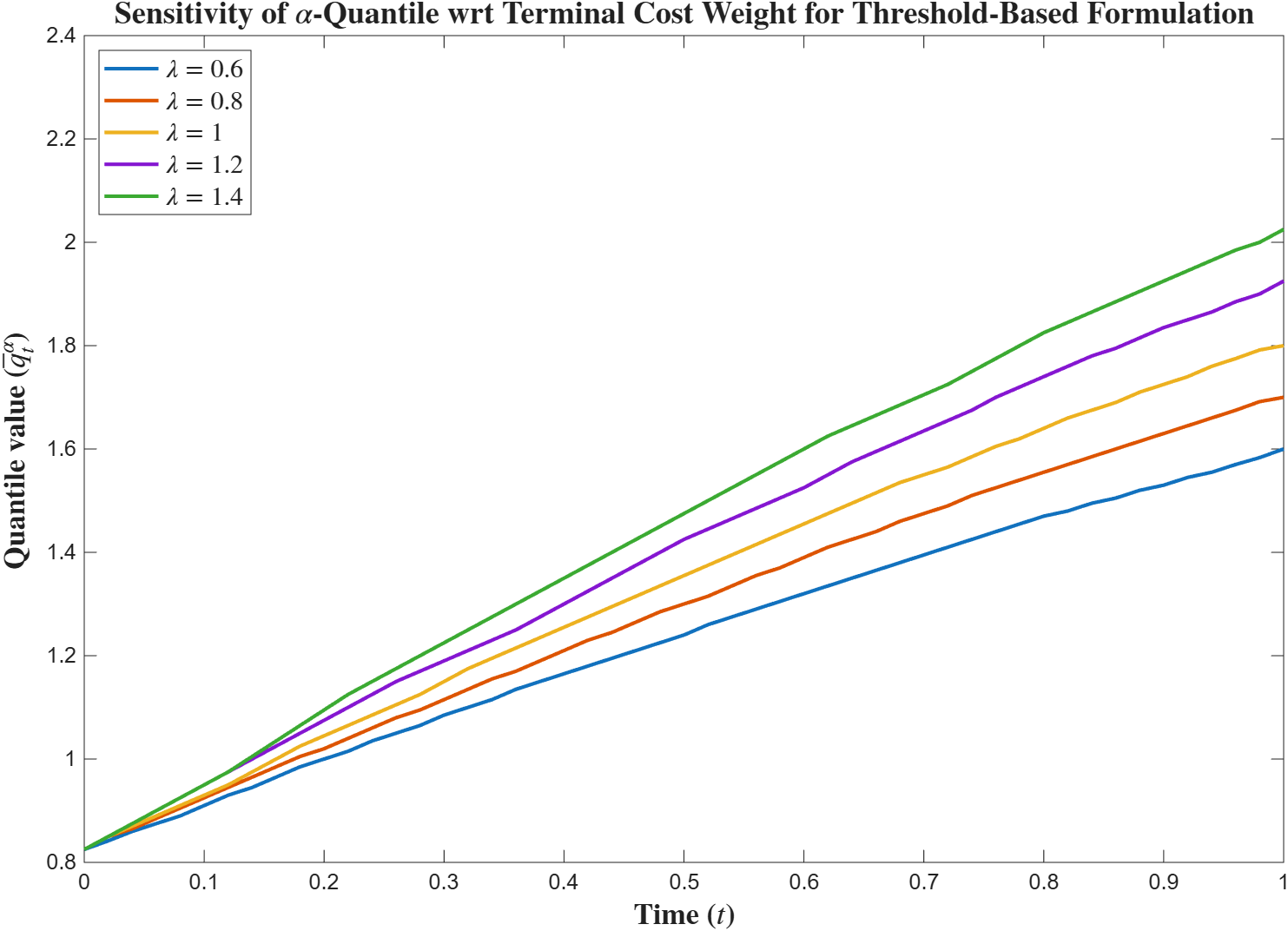}
    \end{minipage}}
    \hspace{0.05\textwidth} 
    \subfigure[]{
    \begin{minipage}[t]{.38 \textwidth}
        \includegraphics[width=\linewidth]{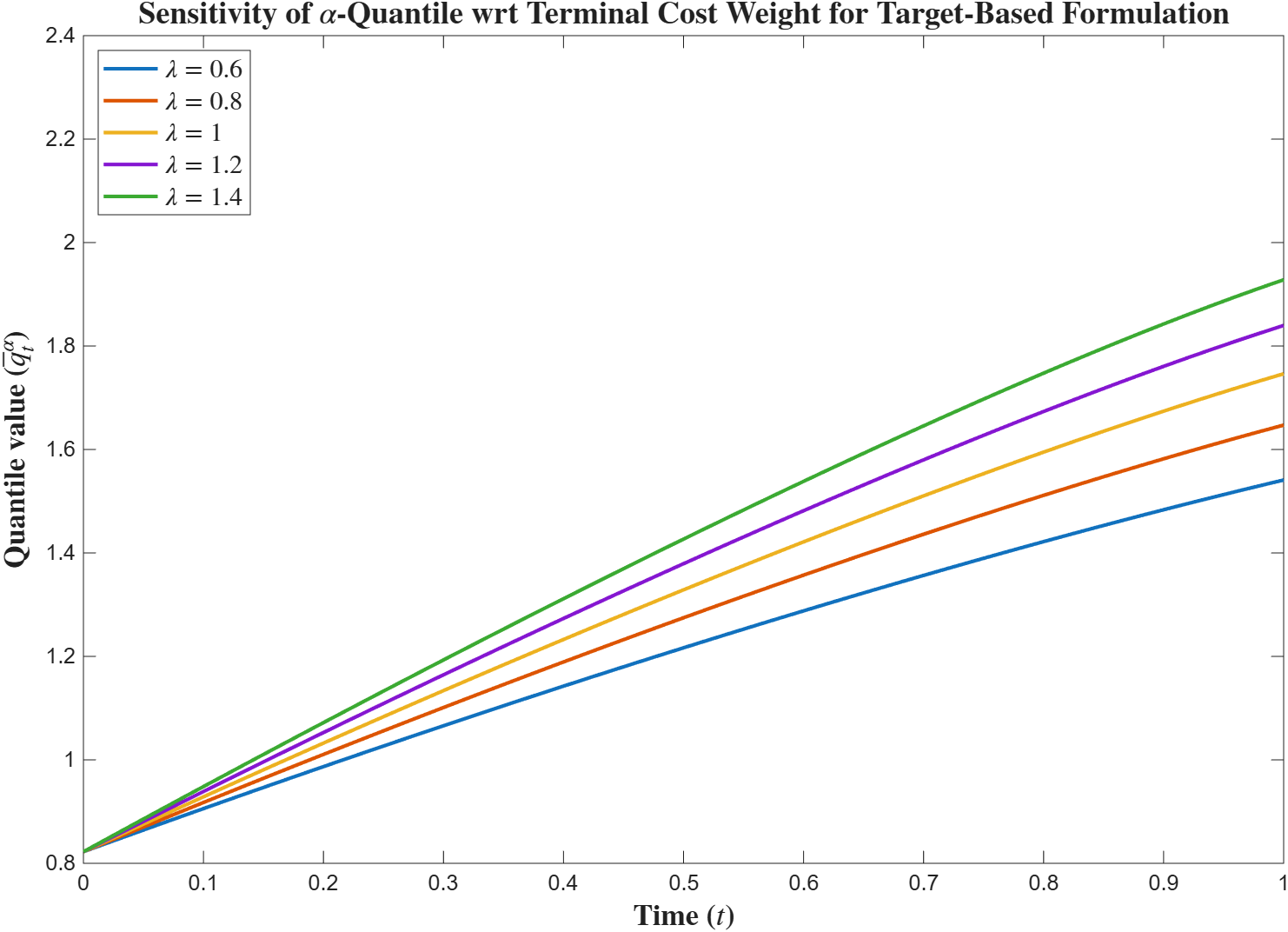}
    \end{minipage}}
    \caption{Sensitivity analysis with respect to the cost functional parameters, running cost weight $r$ and terminal cost weight $\lambda$, varied by $\pm 20\%$ and $\pm 40\%$ from their nominal values $r = 0.1$ and $\lambda = 1$.}
    \label{fig:sensitivity-cost}
\end{figure}
\section{Discussion and Concluding Remarks}\label{sec:conclusion}
For ranking quantilized mean-field games, we introduce the target-based and the threshold-based formulations. The target-based formulation, described by \eqref{general-dynamics-fin-pop}-\eqref{general-sample-quantile} and \eqref{lq-cost-fin-pop}, incorporates a fully quadratic terminal cost. We thoroughly address this problem mathematically and presented results from numerical experiments. The threshold-based formulation, described by \eqref{general-dynamics-fin-pop}-\eqref{general-sample-quantile} and \eqref{cost-fin-pop-VI}, includes a semi-quadratic terminal cost. Although a comprehensive analysis of this problem remains an open mathematical question, we are able to address it numerically. In \Cref{sec:application}, we present the results of numerical experiments within a novel application domain related to early-stage venture investments. Based on these results, we observe that the two formulations lead to very similar $\alpha$-quantile values and distributions of start-up's market values at equilibrium. One may conclude that, in these scenarios, the target-based formulation provides a satisfactory approximation for the threshold-based formulation, which may be more closely aligned with the objectives of competing start-ups. 
This observation can also be explained conceptually. According to \eqref{general-dynamics-fin-pop}-\eqref{general-sample-quantile} and \eqref{cost-fin-pop-VI}, in the threshold-based formulation, start-ups are not incentivized to exert more effort than necessary to fulfill the selection criterion set by the venture capital firm for further funding allocation. This is because as soon as a start-up meets the required threshold its terminal cost drops to zero, while surpassing the threshold implies an increase in the running cost due to the additional effort required. Hence, in this scenario, with surpassing the required threshold, the start-up would incur higher costs to achieve the same outcome, namely qualifying for further funding. 
Consequently, the objective of the start-up in the threshold-based formulation is to ensure that its terminal market value meets the required threshold, without no intention of exceeding it. This objective is close to that of the target-based formulation, where the start-up's goal is to reach a specified terminal market value, with a penalty for exceeding this target. It is noteworthy that in the existing literature, there are instances where quadratic terminal costs are assumed to be a regularized version of put-option-like terminal costs (see, e.g., \cite{Aid-Biagini-2023} in the context of energy markets). 

The advantage of using the target-based formulation to approximate the solution to the threshold-based formulation is that an analytical solution exists for the former, significantly reducing the time required to compute equilibrium strategies. We note that although equilibrium  strategies in the two formulations are different, the endogenous terminal $\alpha$-quantile values and the distributions of the start-ups' market values at equilibrium, are very close in both formulations, which are important for determining the outcome for both start-ups and the venture capital firm. Additionally, from an algorithmic perspective, using the $\alpha$-quantile process obtained from the target-based formulation as the starting point of the numerical scheme presented in \Cref{fig:numerical-scheme} may notably accelerate its convergence.

Finally, the model under study can be extended in several ways to more closely reflect reality in venture investments and other related competitive settings. For example, one could consider heterogeneous agents characterized by different model parameters. In addition, it would be natural to include a common noise term that affects all agents, thereby introducing correlation among them. Although such extensions are feasible, in the interest of simplicity and clarity of exposition in the present paper, they are deferred to future work. Furthermore, another interesting avenue for future research would be to examine the optimization problem faced by the coordinator or venture capital firm, with the aim of characterizing its optimal dynamic funding allocation strategy.

 \bibliographystyle{elsarticle-num} 
 \bibliography{mybib}

\end{document}